\documentclass[12pt,reqno]{amsart}

\usepackage{amsmath, amssymb, amsthm}
\usepackage{enumerate}

\usepackage[margin=1.28in]{geometry}



\begin{document}

 \bibliographystyle{plain}
 \newtheorem{theorem}{Theorem}
 \newtheorem{lemma}[theorem]{Lemma}
 \newtheorem{corollary}[theorem]{Corollary}
 \newtheorem{problem}[theorem]{Problem}
 \newtheorem{conjecture}[theorem]{Conjecture}
 \newtheorem{definition}[theorem]{Definition}
 \newtheorem{prop}[theorem]{Proposition}
 \numberwithin{equation}{section}
 \numberwithin{theorem}{section}

 \newcommand{\mo}{~\mathrm{mod}~}
 \newcommand{\mc}{\mathcal}
 \newcommand{\rar}{\rightarrow}
 \newcommand{\Rar}{\Rightarrow}
 \newcommand{\lar}{\leftarrow}
 \newcommand{\lrar}{\leftrightarrow}
 \newcommand{\Lrar}{\Leftrightarrow}
 \newcommand{\zpz}{\mathbb{Z}/p\mathbb{Z}}
 \newcommand{\mbb}{\mathbb}
 \newcommand{\B}{\mc{B}}
 \newcommand{\cc}{\mc{C}}
 \newcommand{\D}{\mc{D}}
 \newcommand{\E}{\mc{E}}
 \newcommand{\F}{\mathbb{F}}
 \newcommand{\G}{\mc{G}}
  \newcommand{\ZG}{\Z (G)}
 \newcommand{\FN}{\F_n}
 \newcommand{\I}{\mc{I}}
 \newcommand{\J}{\mc{J}}
 \newcommand{\M}{\mc{M}}
 \newcommand{\nn}{\mc{N}}
 \newcommand{\qq}{\mc{Q}}
 \newcommand{\PP}{\mc{P}}
 \newcommand{\U}{\mc{U}}
 \newcommand{\X}{\mc{X}}
 \newcommand{\Y}{\mc{Y}}
 \newcommand{\itQ}{\mc{Q}}
 \newcommand{\sgn}{\mathrm{sgn}}
 \newcommand{\C}{\mathbb{C}}
 \newcommand{\R}{\mathbb{R}}
 \newcommand{\T}{\mathbb{T}}
 \newcommand{\N}{\mathbb{N}}
 \newcommand{\Q}{\mathbb{Q}}
 \newcommand{\Z}{\mathbb{Z}}
 \newcommand{\A}{\mathcal{A}}
 \newcommand{\ff}{\mathfrak F}
 \newcommand{\fb}{f_{\beta}}
 \newcommand{\fg}{f_{\gamma}}
 \newcommand{\gb}{g_{\beta}}
 \newcommand{\vphi}{\varphi}
 \newcommand{\whXq}{\widehat{X}_q(0)}
 \newcommand{\Xnn}{g_{n,N}}
 \newcommand{\lf}{\left\lfloor}
 \newcommand{\rf}{\right\rfloor}
 \newcommand{\lQx}{L_Q(x)}
 \newcommand{\lQQ}{\frac{\lQx}{Q}}
 \newcommand{\rQx}{R_Q(x)}
 \newcommand{\rQQ}{\frac{\rQx}{Q}}
 \newcommand{\elQ}{\ell_Q(\alpha )}
 \newcommand{\oa}{\overline{a}}
 \newcommand{\oI}{\overline{I}}
 \newcommand{\dx}{\text{\rm d}x}
 \newcommand{\dy}{\text{\rm d}y}
\newcommand{\cal}[1]{\mathcal{#1}}
\newcommand{\cH}{{\cal H}}
\newcommand{\diam}{\operatorname{diam}}
\newcommand{\bx}{\mathbf{x}}
\newcommand{\Ps}{\varphi}

 \subjclass[2010]{Primary 52C23; Secondary 37A45}

\parskip=0.5ex

\title[Linear repetitivity and subadditive ergodic theorems]{Linear repetitivity and\\ subadditive ergodic theorems\\for cut and project sets}
\author[Haynes, Koivusalo, Walton]{Alan~Haynes,~
Henna~Koivusalo,~
James~Walton}
\thanks{Research supported by EPSRC grants EP/L001462, EP/J00149X, EP/M023540.\\ \phantom{aa}HK also gratefully acknowledges the support of Osk. Huttunen foundation}
\keywords{Linear repetitivity, mathematical quasicrystals, subadditive ergodic theorems, \v{C}ech cohomology of tiling spaces, cut and project sets}

\allowdisplaybreaks

\begin{abstract}
For the development of a mathematical theory which can be used to rigorously investigate physical properties of quasicrystals, it is necessary to understand regularity of patterns in special classes of aperiodic point sets in Euclidean space. In one dimension, prototypical mathematical models for quasicrystals are provided by Sturmian sequences and by point sets generated by primitive substitution rules. Regularity properties of such sets are well understood, thanks mostly to well known results by Morse and Hedlund, and physicists have used this understanding to study one dimensional random Schr\"{o}dinger operators and lattice gas models. A key fact which plays an important role in these problems is the existence of a subadditive ergodic theorem, which is guaranteed when the corresponding point set is linearly repetitive.\vspace*{.05in}

In this paper we extend the one dimensional model to cut and project sets, which generalize Sturmian sequences in higher dimensions, and which are frequently used in mathematical and physical literature as models for higher dimensional quasicrystals. By using a combination of algebraic, geometric, and dynamical techniques, together with input from higher dimensional Diophantine approximation, we give a complete characterization of all linearly repetitive cut and project sets with cubical windows. We also prove that these are precisely the collection of such sets which satisfy subadditive ergodic theorems. The results are explicit enough to allow us to apply them to known classical models, and to construct linearly repetitive cut and project sets in all pairs of dimensions and codimensions in which they exist.
\end{abstract}

\maketitle

\section{Introduction}

A Delone set $Y\subseteq \R^d$ is {\bf linearly repetitive (LR)} if there exists a constant $C>0$ such that, for any $r\ge 1$, every patch of size $r$ in $Y$ occurs in every ball of diameter $Cr$ in $\R^d$. This concept was studied by F.~Durand (for the special case of subshifts) in \cite{Dura1998,Dura2000}, and by Lagarias and Pleasants in \cite{LagaPlea2003}, and it has since been explored by many authors (e.g. \cite{AlisCoro2011,AlisCoro2013,Besb2008,BesbBoshLenz2013,CortDuraPeti2010,DamaLenz2001}). It was shown in \cite{LagaPlea2003} that linear repetitivity guarantees the existence of strict uniform patch frequencies or, equivalently, strict ergodicity of the associated dynamical system on the hull of the point set. The fact that the Dirac comb of a Delone set in $\R^d$ is a translation bounded measure, together with the existence of strict uniform patch frequencies, implies the existence of a unique autocorrelation measure which is also a translation bounded measure (and hence a tempered distribution). This in turn implies that the associated diffraction measure (the Fourier transform of the autocorrelation) is a positive, translation bounded measure. For this reason, aperiodic LR Delone sets are a common source of examples of mathematical models for quasicrystals, and they are sometimes referred to as `perfectly ordered quasicrystals'.

In the direction of potential physical applications, several authors have studied the random Schr\"{o}dinger operator and lattice gas models on one dimensional quasicrystals, as modeled either by LR Sturmian sequences or point sets constructed using primitive substitution rules \cite{BellIochScopTest1989,IochTest1991,Suto1987}. For both of these applications it is necessary to establish the validity of a uniform subadditive ergodic theorem (see below for definitions). In the case of point sets constructed using primitive substitutions (in fact, in any dimension), such theorems were established by Geerse and Hof in \cite{GeerHof1991}. For LR Sturmian sequences they were established by Lenz in \cite{Lenz2003}. The results of our paper are relevant in this context because of the fact, proved by Damanik and Lenz in \cite{DamaLenz2001}, that linear repetitivity implies the existence of a uniform subadditive ergodic theorem. In fact, one of our main results below (Theorem \ref{thm.PQSuff}) shows that, for a natural class of cut and project sets which are higher dimensional generalizations Sturmian sequences, LR is equivalent to the existence of a subadditive ergodic theorem. The characterization which we provide in Theorem \ref{thm.LRClassification} thus opens the door for the study of physical properties of a rich collection of higher dimensional point sets, a project which is the focus of our current research on this topic.

In \cite[Problem 8.3]{LagaPlea2003}, motivated by many of the connections which we have described, Lagarias and Pleasants asked for a characterization of LR cut and project sets. For $2$ to $1$ cut and project sets with appropriately chosen windows, such a characterization can be obtained using classical results of Morse and Hedlund \cite{MorsHedl1938,MorsHedl1940}. Morse and Hedlund showed that there is a natural bijective correspondence between repetitive Sturmian words and return times, to specially chosen intervals, of orbits of irrational rotations on $\R/\Z$. The latter are precisely given by $2$ to $1$ cut and project sets. Under this correspondence, a Sturmian word will be LR if and only if the irrational rotation which describes it is determined by a badly approximable real number (i.e. a real number whose continued fraction partial quotients are uniformly bounded). The proof of this fact relies on ideas from Diophantine approximation that are closely connected to the theory of continued fractions. In fact the theory is so robust that, in the special case of Sturmian sequences, one can even derive an exact formula for the repetitivity function (see \cite[Theorem 11]{AlesBert1998} and \cite[p.2]{MorsHedl1940}).

For higher dimensional cut and project sets we immediately encounter serious difficulties with generalizing the above mentioned results. One problem is that the underlying dynamical systems which are used for pattern recognition in these sets are, in general, higher rank actions on higher dimensional tori which are more complicated to understand. Another is that there is no known algorithm which can do for higher dimensional cut and project sets what the simple continued fraction algorithm does for Sturmian sequences. Nevertheless, by using a combination of algebraic, geometric, and dynamical tools, together with input from the higher dimensional theory of Diophantine approximation, we are able to obtain the following theorem, which can be seen as a generalization of the classification of Morse and Hedlund, to arbitrary dimensions.
\begin{theorem}\label{thm.LRClassification}
A $k$ to $d$ cubical cut and project set defined by linear forms $\{L_i\}_{i=1}^{k-d}$ is LR if and only if
\begin{itemize}
  \item[(LR1)]  The sum of the ranks of the kernels of the maps $\mc{L}_i:\Z^d\rar\R/\Z$ defined by
  \[\mc{L}_i(n)=L_i(n)~\mathrm{mod}~1\] is equal to $d(k-d-1)$, and\vspace*{.1in}
  \item[(LR2)] Each $L_i$ is relatively badly approximable.
\end{itemize}
\end{theorem}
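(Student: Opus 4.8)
The plan is to translate linear repetitivity into a covering property of the acceptance-domain partitions, to read off condition (LR1) from a complexity count, and then to reduce the remaining content — the role of (LR2) — to the codimension-one case, where the Diophantine input lives. Throughout I write $r_i:=\mathrm{codim}\,\ker\mc{L}_i=d-\mathrm{rank}\,\ker\mc{L}_i$, equivalently the $\Q$-rank of $L_i$ modulo $1$, so that (LR1) is exactly the assertion $\sum_{i=1}^{k-d}r_i=d$. Under the standing non-degeneracy conventions on the scheme — that the internal projection sends $\Z^k$ to a dense subgroup of $\R^{k-d}$, and that $Y$ is aperiodic — the map $\phi\colon\Z^d\to\T^{k-d}$ given by $\phi(n)=(L_1(n),\dots,L_{k-d}(n))\mo 1$ has dense image and trivial kernel; a rank count for the subgroups $\ker\mc{L}_i$, whose common intersection is trivial, forces $\sum_i r_i\ge d$, so that (LR1) is the equality case of this inequality. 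Using finite local complexity, the $r$-patches of $Y$ correspond to the cells lying inside the window $\bar W$ of the partition $\mc{P}_r$ of $\T^{k-d}$ generated by $\{\partial\bar W-\phi(m):|m|\le r\}$, the patch of $n\in Y$ being coded by the cell containing $\phi(n)$. Since $\bar W=\prod_i[0,w_i]$ is a cube, $\mc{P}_r=\bigvee_i\mc{P}_{r,i}$, where $\mc{P}_{r,i}$ is the partition of the $i$-th circle by the set $\{L_i(m),\,L_i(m)+w_i\mo 1:|m|\le r\}$, which has $\asymp r^{r_i}$ distinct points since $\ker\mc{L}_i$ has corank $r_i$. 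In these terms $Y$ is LR precisely when there is a constant $C$ such that for every $r$ each cell of $\mc{P}_r$ inside $\bar W$ contains some $\phi(n)$ with $|n|\le Cr$; this is the form of LR I would work with.

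For necessity of (LR1): if $Y$ is LR then every $r$-patch occurs inside any ball of radius $Cr$, so the number of $r$-patches is $O(r^d)$. By the dictionary above that number equals $\prod_i\#\{\text{arcs of }\mc{P}_{r,i}\text{ inside }[0,w_i]\}$, and since the $\asymp r^{r_i}$ values $\{L_i(m)\mo 1:|m|\le r\}$ are equidistributed they leave $\asymp r^{r_i}$ arcs inside $[0,w_i]$; hence there are $\asymp r^{\sum_i r_i}$ patches. Comparing the two estimates forces $\sum_i r_i\le d$, and with the reverse inequality above this yields $\sum_i r_i=d$, i.e.\ (LR1).

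Now assume (LR1) and reduce to a product. Because $\sum_i r_i=d$ while $\bigcap_i\ker\mc{L}_i=\{0\}$, the natural map $\Phi\colon\Z^d\to\prod_i\Z^d/\ker\mc{L}_i$ is injective with image of finite index, and each $L_i$ descends (modulo torsion) to a totally irrational linear form $\ell_i$ on $\Z^{r_i}$. Since $\bar W$ is a product and the $i$-th coordinate of $\phi$ factors through the $i$-th component of $\Phi$, the set $Y$ is, under this linear change of coordinates, the intersection with the sublattice $\Phi(\Z^d)$ of the Cartesian product $\prod_i Y_i$, where $Y_i\subset\Z^{r_i}$ is the codimension-one cubical cut and project set with slope $\ell_i$ and window $[0,w_i]$; in particular $Y$ and $\prod_i Y_i$ are mutually locally derivable, so $Y$ is LR iff $\prod_i Y_i$ is. A finite Cartesian product of Delone sets is LR iff each factor is, because patches of a product decompose coordinatewise into patches of comparable size. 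For a totally irrational form, ``relatively badly approximable'' just means ``badly approximable'', so (LR2) says precisely that every $\ell_i$ is badly approximable, and the whole theorem reduces to the claim that a codimension-one cubical cut and project set with slope $\ell\in\R^s$ is LR if and only if $\ell$ is badly approximable.

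It remains to prove this codimension-one equivalence, and I expect it to be the main obstacle; it is where higher-dimensional Diophantine approximation enters. Here $\mc{P}_r$ partitions the circle by $\{n\cdot\ell\mo 1:|n|\le r\}$ together with its $w$-translate, and by the criterion above LR means that for some $C$ and all $r$ every such arc already contains $n\cdot\ell\mo 1$ for some $|n|\le Cr$. If $\ell$ is badly approximable, a multidimensional three-distance estimate should give that all arcs of $\mc{P}_r$ have length $\asymp r^{-s}$ — the lower bound directly from bad approximability, the matching upper bound from the $\asymp r^s$-point count plus bad approximability for uniformity — and a transference argument then fills each arc by some $n\cdot\ell$ with $|n|\lesssim r$, yielding LR with an explicit constant. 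Conversely, if $\ell$ is not badly approximable there are integer vectors $q_j$ with $|q_j|^s\|q_j\cdot\ell\|\to 0$, and at scales $R_j\asymp|q_j|$ the partition $\mc{P}_{R_j}$ acquires an arc of length $o(R_j^{-s})$ whose return times along the orbit $\{n\cdot\ell\}$ exceed $CR_j$ for every fixed $C$, so LR fails along the $R_j$. The delicate part is this last quantitative estimate: in dimension $s\ge 2$ there is no continued-fraction algorithm to organise the approximations, so the bookkeeping behind the classical Sturmian repetitivity formula must be replaced by a careful analysis of best-approximation vectors, transference inequalities, and the geometry of the point sets $\{n\cdot\ell\mo 1:|n|\le R\}$; this is the step that makes (LR2) the genuinely delicate half of the characterisation.
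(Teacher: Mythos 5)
Your overall architecture---count patches to force (LR1), then decompose $\Z^d$ along the complements $\Lambda_i=\bigcap_{j\ne i}\ker\mc{L}_j$ and push the Diophantine content down to gap estimates on circles---is essentially the paper's, and the (LR1) half of your argument is correct. But there are two genuine gaps. First, the asserted mutual local derivability between $Y$ and the product $\prod_i Y_i$ does not hold as stated and cannot serve as a formal reduction. The lattice $\Lambda=\Lambda_1\oplus\cdots\oplus\Lambda_{k-d}$ has finite index in $\Z^d$ but is in general proper (for the Ammann--Beenker slope $L_1=\tfrac{\sqrt2}{2}(x_1+x_2)$, $L_2=\tfrac{\sqrt2}{2}(x_1-x_2)$ it has index $2$), so the acceptance-domain partition for $Y$ is cut by all the values $\mc{L}_i(\lambda+f)$ with $f$ running over coset representatives of $\Z^d/(\Lambda_i+S_i)$, and these extra translates strictly refine your product partition $\bigvee_i\mc{P}_{r,i}$. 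A cell of the refined partition could a priori be far smaller than the product cell containing it; ruling this out is again a Diophantine statement about $\|\mc{L}_i(\lambda+f)\|$ (Lemma \ref{lem.RelBad1} of the paper), not a consequence of LR of the factors. Hence ``each $Y_i$ LR $\Rightarrow$ $Y$ LR'' does not follow from your product lemma. Relatedly, identifying (LR2) with bad approximability of the descended forms on the particular complements $\Lambda_i$ silently uses that relative bad approximability is independent of the choice of complement (Lemma \ref{lem.RelBad2}), which needs proof.

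Second, the codimension-one equivalence, which you rightly identify as carrying all the remaining content, is left as a sketch, and the two missing pieces are precisely the work. The forward direction needs, beyond the easy lower bound $\|(n-m)\cdot\ell\|\gg r^{-s}$ on arc lengths, the uniform density statement that $\{n\cdot\ell\bmod 1:|n|\le N\}$ is $O(N^{-s})$-dense for every $N$; there is no usable ``multidimensional three-distance theorem'' for $s\ge 2$, and what is actually required is the transference theorem (Theorem \ref{thm.CasselsThm}, (T1)$\Leftrightarrow$(T2)), which must be invoked explicitly. For the converse, producing an arc of length $o(R^{-s})$ is not enough: you must exhibit a point of the window whose entire orbit under $\{n:|n|\le CR\}$ misses that arc, for every fixed $C$. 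This is a pigeonhole argument---the orbit has only $O((CR)^{s})$ points, hence leaves a gap of length $\gg (CR)^{-s}$, into which the small arc can be translated by minimality---and your phrase ``return times exceed $CR_j$'' states the conclusion rather than proving it. Neither gap is fatal to the strategy, but filling them amounts to carrying out the estimates of the paper's Section \ref{sec.LRClass}.
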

In the statement of this theorem, a {\bf cubical cut and project set} is one which is regular, aperiodic, and totally irrational, and which is defined using a strip whose intersection with $\R^{k-d}$ is the unit cube $[0,1)^{k-d}$. Precise definitions of these terms are given below.


After proving Theorem \ref{thm.LRClassification} we will explore the relationship between cubical cut and project sets and {\bf canonical cut and project sets} (formed with a window which is the projection of the unit cube in $\R^k$ to the internal space). It will turn out that in many cases which are commonly cited in the literature, the results we state for cubical cut and project sets also apply to canonical ones. However, what is possibly more interesting is that there are examples of LR cubical cut and project sets which are no longer LR when their windows are replaced by canonical ones.

We give examples later in the paper which seem to indicate that there are two potential sources for this type of behavior. The first is geometric, and arises in the situation when at least two of the linear forms defining the physical space have co-kernels with different ranks. The second (which can occur even in the absence of the geometric situation just described) is Diophantine, and is related to the fact that any number can be written as a product of two badly approximable numbers (this follows from continued fraction Cantor set arguments, see \cite{Hall1947}). The complete statements of our results about canonical cut and project sets can be found in Section \ref{sec.Canonical}, but we summarize what has just been mentioned in the following theorem.
\begin{theorem}\label{thm.Cub<->Can}
If $Y$ is a cubical cut and project set which is not LR, then the cut and project set formed from the same data as $Y$, but with the canonical window in place of the cubical one, is also not LR. However, the converse of this statement is not true, in general.
\end{theorem}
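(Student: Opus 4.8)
The theorem has two halves, which I would prove by different means. For the first --- that a non-LR cubical cut and project set yields a non-LR canonical one --- I would argue the contrapositive: assuming the canonical set $Y^{\mathrm{can}}$ built from the same data as $Y$ is LR, deduce that $Y$ is LR. By Theorem~\ref{thm.LRClassification} it suffices to obtain (LR1) and (LR2), and the key point is that both are properties of the physical space alone, forced on \emph{any} polytopal-window cut and project set over that space once it is LR. On the complexity side, the patch-counting function of a $k$ to $d$ cut and project set with polytopal window grows polynomially, with an exponent read off from the physical space together with the facet directions of the window --- each facet gives a family of mutually parallel singular hyperplanes in the internal torus, a size-$r$ ball meets $\asymp r^{d-\mathrm{rank}(\ker\mc{L}_i)}$ of those in the $i$-th coordinate direction, and the number of cells cut out by all these families controls the count. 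For the unit cube this gives exponent $d(k-d)-\sum_i\mathrm{rank}(\ker\mc{L}_i)$; the canonical zonotope $\pi_{\mathrm{int}}([0,1]^k)$ has facets in all $k-d$ coordinate directions (carrying the same forms $\mc{L}_i$) together with further facets in mixed directions, so the patch complexity of $Y^{\mathrm{can}}$ grows at least as fast as that of $Y$.

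Given this, LR of $Y^{\mathrm{can}}$ forces its complexity to be $O(r^d)$ by the Lagarias--Pleasants bound \cite{LagaPlea2003}, hence so does that of $Y$, i.e.\ $\sum_i\mathrm{rank}(\ker\mc{L}_i)\ge d(k-d-1)$; the reverse inequality holds for every cubical cut and project set, since total irrationality prevents the kernels from being larger, so (LR1) holds. For (LR2) I would reuse the mechanism by which bad approximation obstructs LR: if the linear form attached to some facet of the window is not relatively badly approximable, then unusually good rational approximations in the corresponding direction create in the point set large patches whose nearest repetition lies at distance growing faster than linearly in the size of the patch, so the set is not LR. The canonical zonotope has a facet in each coordinate direction carrying exactly $\mc{L}_1,\dots,\mc{L}_{k-d}$, so LR of $Y^{\mathrm{can}}$ forces every $L_i$ to be relatively badly approximable; this is (LR2). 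Thus $Y^{\mathrm{can}}$ LR implies (LR1) and (LR2), whence $Y$ is LR by Theorem~\ref{thm.LRClassification}.

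For the failure of the converse I would exhibit, by two independent mechanisms, cubical cut and project sets that are LR but whose canonical counterparts are not. The \emph{Diophantine} construction exploits Hall's theorem (see \cite{Hall1947}), that every real number is a product of two badly approximable numbers: take a $4$ to $2$ example with $L_1(n)=\alpha n_1+an_2$ and $L_2(n)=bn_1+\beta n_2$, $a,b$ nonzero integers and $\alpha,\beta$ badly approximable; then (LR1) and (LR2) hold, so $Y$ is LR, and since the two forms have co-kernels of the same rank, no geometric obstruction is present. Yet a mixed facet of the canonical zonotope carries a form whose irrational part involves the product $\alpha\beta$; choosing $\alpha,\beta$ so that $\alpha\beta$ is \emph{not} badly approximable makes $Y^{\mathrm{can}}$ violate the Diophantine half of the LR criterion, hence non-LR. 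The \emph{geometric} construction uses a physical space on which two defining forms have co-kernels of different ranks --- for instance a $5$ to $3$ set with one form integral on a rank-$2$ sublattice and another on a rank-$1$ sublattice, arranged so that (LR1) and (LR2) still hold: here (LR2) holds so there is no Diophantine obstruction, but one shows the extra mixed facets of the canonical zonotope are genuinely non-redundant rather than merely refining the coordinate grid, so the complexity of $Y^{\mathrm{can}}$ exceeds $r^d$ and it cannot be LR. In each case one must check, routinely, that the forms really satisfy (LR1) and (LR2), and verify the stated obstruction. The main difficulty throughout --- and the technical core of Section~\ref{sec.Canonical}, on which clean proofs of both halves rest --- is the dictionary between the combinatorics of the zonotope $\pi_{\mathrm{int}}([0,1]^k)$ and the resulting singular structure in the internal torus: one must determine exactly which facet directions occur, which of the associated hyperplane families are redundant, and the Diophantine quality of their normals, all in terms of the ranks and arithmetic of the forms $\{L_i\}$.
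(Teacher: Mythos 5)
Your overall architecture is right --- reduce the first half to ``canonical LR $\Rightarrow$ cubical LR'' and refute the converse by two explicit constructions, one Diophantine and one geometric, which are indeed the paper's two mechanisms (Lemmas \ref{lem.Cub<->Can2} and \ref{lem.Cub<->Can1}). But your proof of the first half has a genuine gap, and it stems from missing the one idea that makes that half easy: \emph{local derivability}. The cubical window is obtained from the canonical one by finitely many intersections of lattice translates, so by Lemma \ref{lem.MLDCutAndProj} / Corollary \ref{cor.MLDCubCanon} the cubical set is locally derivable from the canonical set; since a set locally derivable from an LR set is LR, the first half is immediate. In its place you run a direct argument on the singular structure of the canonical window (complexity comparison via facet counting, then a thin-slab obstruction to extract (LR2)). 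Both steps silently assume that regions cut out by the singular hyperplanes of the canonical window are in bijection with patch equivalence classes. That is exactly what fails for non-parallelotope windows: Lemma \ref{lem.ConnComp} requires $\mc{W}$ to be a parallelotope generated by integer vectors, and for the canonical zonotope a single patch class can correspond to \emph{several} connected components of $\mathrm{reg}(r)$ (the paper says this explicitly in the Ammann--Beenker discussion). So ``more cutting hyperplanes'' does not by itself give ``at least as many patches'', and ``a thin cell exists'' does not by itself give ``a patch class of small frequency''. Your intended conclusions are true, but the route you chose requires the hard patch-separation analysis that the local-derivability argument avoids entirely.

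For the failure of the converse, your two examples are in the right spirit, and your $5$ to $3$ geometric example with kernel ranks $1$ and $2$ matches Lemma \ref{lem.Cub<->Can1} exactly. However, the step you label ``routine'' is the technical core: one must show that the thin triangle cut off near a vertex of the zonotope by a well-placed translate of a slanted facet is a \emph{separate} patch class of frequency $o(r^{-d})$, and since Lemma \ref{lem.ConnComp} is unavailable this is done by a direct case analysis (Cases 1--3 in the proof of Lemma \ref{lem.Cub<->Can1}) showing that points inside and outside the triangle have inequivalent $r$-patches. Also, the obstruction in the geometric case is a frequency bound ($|\mc{T}_r|\ll r^{-4}$ versus the required $\gg r^{-3}$), not the complexity bound you assert. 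Finally, your Diophantine example needs more care than ``nonzero integers $a,b$'': the paper's Lemma \ref{lem.Cub<->Can2} engineers rational coefficients $2/5$ and $5/2$ precisely so that the intersection point of the translated slanted facet with a fixed line reduces to the single quantity $\frac{5\alpha\beta}{2}n_2$ modulo $1$, to which Hall's theorem and condition \eqref{eqn.WellAppHyp} can be applied; with your coefficients the analogous quantity mixes $\alpha n_4$ with $(ab-\alpha\beta)n_2$ and the required smallness is not obviously achievable, so the example must be verified or adjusted rather than asserted.
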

It should be pointed out that most of the canonical cut and project sets of specific interest in the literature arise from subspaces defined by linear forms with coefficients in a fixed algebraic number field. In such a case the Diophantine behavior alluded to at the end of the paragraph before Theorem \ref{thm.Cub<->Can} cannot occur. To illustrate this point, in Section \ref{sec.Examples} we will briefly explain how to prove that Penrose and Ammann-Beenker tilings are LR. This in itself is not a new result, and in fact it is fairly obvious from descriptions of these tilings using substitution rules. What is new is that our proof uses only their descriptions as cut and project sets.

Let us now return to discuss the content of Theorem \ref{thm.LRClassification}. In the statement of the theorem, condition (LR1) is necessary and sufficient for $Y$ to have minimal patch complexity. For comparison, in \cite[Section 5]{Juli2010} it was shown that minimal patch complexity is a necessary and sufficient condition for the \v{C}ech cohomology (with rational coefficients) of the associated tiling space to be finitely generated. Many of the calculations in the first part of our proof share a common thread with those that arise in the calculation of the cohomology groups. This point of view eventually leads us to equations \eqref{eqn.LSpacing1}-\eqref{eqn.CSpacing1}, which form the crux of our argument, allowing us to move directly in our proof to a position where we can apply condition (LR2). It should be pointed out that, without the group theoretic arguments that give us the rigid structure imposed by these equations, the proof would fall apart.

Condition (LR2) is a Diophantine condition, which places a strong restriction on how well the subspace defining $Y$ can be approximated by rationals. A linear form is {\bf relatively badly approximable} if it is badly approximable when restricted to rational subspaces complementary to its kernel. In the next section we will explain this in more detail and prove that it is a well defined property. Note that in the special case when $k-d=1,$ condition (LR1) is automatically satisfied, and condition (LR2) requires the linear form defining $Y$ to be badly approximable in the usual sense. This observation, together with Corollary \ref{cor.MLDCubCanon}, immediately implies the following complete characterization of both cubical and canonical cut and project sets in codimension one.
\begin{corollary}\label{cor.LRCodim1}
A $k$ to $k-1$ cubical or canonical cut and project set defined by a linear form $L$ is LR if and only if $L$ is badly approximable.
\end{corollary}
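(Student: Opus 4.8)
The plan is to derive Corollary \ref{cor.LRCodim1} by specializing Theorem \ref{thm.LRClassification} to the codimension one case $k-d=1$, and then transferring the conclusion from cubical to canonical windows using Corollary \ref{cor.MLDCubCanon}. When $k-d=1$ there is a single defining linear form $L=L_1$ and a single associated map $\mc{L}_1\colon\Z^d\rar\R/\Z$, $\mc{L}_1(n)=L(n)\mo 1$. The first step is to observe that (LR1) is vacuous in this setting: the prescribed value $d(k-d-1)$ equals $0$, and since a cubical cut and project set is by definition aperiodic and totally irrational, there is no nonzero $n\in\Z^d$ with $L(n)\in\Z$, so $\ker\mc{L}_1=\{0\}$ has rank $0$ and the rank condition holds automatically. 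Thus Theorem \ref{thm.LRClassification} collapses to the assertion that $Y$ is LR if and only if $L$ is relatively badly approximable.

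The second step is to unwind (LR2) in this case. Because $\ker\mc{L}_1$ is trivial, the only rational subspace of $\Q^d$ complementary to it is $\Q^d$ itself, so the definition of relative bad approximability (which, as will be verified in the next section, is well posed) says nothing more than that $L$ is badly approximable in the usual sense. This already proves the corollary for cubical cut and project sets.

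Finally, for the canonical case I would invoke Corollary \ref{cor.MLDCubCanon}: when $k-d=1$ the internal space is one-dimensional, so the canonical window — the projection of $[0,1]^k$ to the internal line — is just a bounded interval, and an affine rescaling of the internal space carries it onto the cubical window $[0,1)$. Hence the canonical and cubical cut and project sets built from $L$ are MLD, and since linear repetitivity is preserved under mutual local derivability, the canonical set is LR exactly when the cubical one is, i.e.\ exactly when $L$ is badly approximable. I do not expect a genuine obstacle here, as all the substance is already contained in Theorem \ref{thm.LRClassification} and Corollary \ref{cor.MLDCubCanon}; the only point requiring a moment's care is checking that aperiodicity and total irrationality force $\ker\mc{L}_1$ to be trivial, which is precisely what makes ``relatively badly approximable'' reduce to the classical notion.
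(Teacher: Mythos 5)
Your proposal is correct and follows the paper's own route: in codimension one, aperiodicity forces $\ker\mc{L}_1=\{0\}$, so (LR1) holds automatically, (LR2) reduces (via Lemma \ref{lem.RelBad2}) to ordinary bad approximability of $L$, and the canonical case is handled by Corollary \ref{cor.MLDCubCanon}. One small caution: the MLD step should be justified by observing that the hypothesis of Corollary \ref{cor.MLDCubCanon} is vacuously satisfied when the internal space is the single coordinate line $\R e_k$, rather than by ``affine rescaling'' of the window onto $[0,1)$ --- rescaling a window genuinely changes which lattice points are selected and is not, by itself, an MLD operation.
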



Now we turn to questions more closely related to dynamical properties of cut and project sets. In \cite{BesbBoshLenz2013}, Besbes, Boshernitzan, and Lenz proved that an aperiodic Delone set is LR if and only if it satisfies regularity conditions which they call (PQ) and (U). One consequence of our results is that, for cubical cut and project sets, condition (U) can be omitted from this criteria. In other words, for cubical cut and project sets, condition (PQ) by itself is a necessary and sufficient condition for LR. Results of this type for analogous problems in symbolic dynamics were previously discovered by other authors, and they appear to have been anticipated to hold in greater generality (see \cite[Remark 5]{BesbBoshLenz2013}). These discoveries are of particular interest because of their relevance to the validity of subadditive ergodic theorems. In Section \ref{sec.ThmPQ} we will use Theorem \ref{thm.LRClassification}, together with the results of \cite{BesbBoshLenz2013}, to deduce the following result.
\begin{theorem}\label{thm.PQSuff}
A cubical cut and project set is LR if and only if it satisfies condition (PQ) from \cite{BesbBoshLenz2013}. Consequently, such a set satisfies a subadditive ergodic theorem if and only if it is LR.
\end{theorem}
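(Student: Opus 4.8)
The plan is to leverage Theorem~\ref{thm.LRClassification} together with the characterization of LR via conditions (PQ) and (U) from \cite{BesbBoshLenz2013}. Since that work shows an aperiodic Delone set is LR if and only if it satisfies both (PQ) and (U), the first direction is immediate: if a cubical cut and project set $Y$ is LR, then in particular it satisfies (PQ). The content of the theorem is therefore the reverse implication: for the restricted class of cubical cut and project sets, (PQ) alone already forces LR, i.e. (PQ) implies (U) within this class. The natural route is to argue contrapositively: assume $Y$ is a cubical cut and project set that is \emph{not} LR, and show that $Y$ fails (PQ). By Theorem~\ref{thm.LRClassification}, failure of LR means that either (LR1) fails or (LR2) fails, so I would split into these two cases and in each case exhibit the quantitative failure of repetition growth that violates (PQ).

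First I would recall precisely what (PQ) asserts (a "positivity of quasi-weights" / linear growth lower bound on the repetitivity-type function, in the sense of \cite{BesbBoshLenz2013}) and note that its negation produces a sequence of patches whose return radii grow superlinearly in the patch size along some subsequence. In the case where (LR1) fails, the sum of the kernel ranks is not $d(k-d-1)$; by the complexity discussion already invoked in the paper (following \cite{Juli2010}), (LR1) is exactly the condition for minimal patch complexity, so its failure yields superlinear complexity growth, and one then extracts from the excess complexity a family of patches that recur too sparsely, contradicting (PQ). In the case where (LR1) holds but some defining form $L_i$ is not relatively badly approximable, I would use the equations \eqref{eqn.LSpacing1}--\eqref{eqn.CSpacing1} that form the crux of the proof of Theorem~\ref{thm.LRClassification}: these encode the spacing statistics of the relevant return words in terms of the Diophantine approximation data of $L_i$, so unbounded partial-quotient-type behavior translates directly into arbitrarily long gaps relative to patch size, again violating (PQ). Having shown in both cases that $Y$ is not (PQ), the contrapositive gives that (PQ) implies LR, completing the equivalence.

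For the final sentence of the theorem, I would then combine this equivalence with two external facts: on one hand, Damanik and Lenz \cite{DamaLenz2001} show that LR implies a uniform subadditive ergodic theorem; on the other, the results of \cite{BesbBoshLenz2013} (together with the strict ergodicity/uniform patch frequency consequences of their framework) give that satisfying a subadditive ergodic theorem forces condition (PQ). Chaining these, "subadditive ergodic theorem $\Rightarrow$ (PQ) $\Rightarrow$ LR $\Rightarrow$ subadditive ergodic theorem" yields the stated equivalence for cubical cut and project sets.

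The main obstacle I expect is the case (LR1) holds, (LR2) fails: translating the arithmetic failure of relative bad approximability into an \emph{explicit} superlinear lower bound on the return radius of a concrete sequence of patches requires the full strength of the spacing equations \eqref{eqn.LSpacing1}--\eqref{eqn.CSpacing1} and a careful choice of which patches to track, so that the contradiction with (PQ) is quantitative rather than merely asymptotic. The (LR1)-failure case should be comparatively soft, since superlinear complexity is itself already incompatible with the linear-growth bound implicit in (PQ); the bookkeeping there is routine once the complexity estimate from the cohomology-style computation is in hand.
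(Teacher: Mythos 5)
Your overall architecture matches the paper's: the forward direction is immediate from the (PQ)$+$(U) characterization in \cite{BesbBoshLenz2013}, the converse is to be proved by showing that (PQ) forces (LR1) and (LR2) and then invoking Theorem \ref{thm.LRClassification}, and the final sentence follows from \cite[Theorem 1]{BesbBoshLenz2013}. Your (LR1) step is also essentially right: (PQ) yields a uniform lower bound $\xi_{\mc{P}}\ge C/r^d$ on patch frequencies, so the number of patches of size $r$ is $\ll r^d$, which by the first part of the proof of Theorem \ref{thm.LRClassification} is exactly (LR1).

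There is, however, a genuine gap in your treatment of the case where (LR1) holds and (LR2) fails. You describe the negation of (PQ) as producing ``patches whose return radii grow superlinearly'' and plan to violate (PQ) by exhibiting ``arbitrarily long gaps relative to patch size.'' That is the negation of \emph{linear repetitivity}, not of (PQ): for a fixed patch $\mc{P}$ of size $r$ in a uniquely ergodic cut and project set, a single large ball missing $\mc{P}$ does not make $\nu'(\mc{P})$ small, because $\nu'(\mc{P})$ is governed by the asymptotic density of occurrences, i.e. essentially by $r^d\xi_{\mc{P}}$. Arguing ``long gaps $\Rightarrow$ not (PQ)'' silently assumes that (PQ) implies LR, which is precisely what is to be proved (and which fails for general Delone sets, since condition (U) is not redundant in general). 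What is needed instead is a \emph{frequency} estimate: by Lemma \ref{lem.FreqVol} the frequency of a patch equals the volume of the corresponding connected component of $\mathrm{reg}(r)$, and failure of relative bad approximability for (say) $L_1$ produces, for every $\epsilon>0$, some $r$ and a component $U$ as in \eqref{eqn.ConnCompU} with $r_1-\ell_1<\epsilon/r^{m_1}$ and $r_i-\ell_i\ll r^{-m_i}$ for $i\ge 2$, hence with volume $\ll \epsilon/r^d$. This exhibits a patch of frequency $o(r^{-d})$, contradicting the bound $\xi_{\mc{P}}\ge C/r^d$ supplied by (PQ). With that substitution in place of the ``long gaps'' step, your argument closes and coincides with the paper's.
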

To begin to make the above statements more precise, let us now give some definitions. Let $E$ be a $d$-dimensional subspace of $\R^k$, and $F_\pi\subseteq\R^k$ a subspace complementary to $E$ (these are referred to as the {\bf physical space} and {\bf internal space}, respectively). Write $\pi$ for the projection onto $E$ with respect to the decomposition $\R^k=E+F_\pi$. Choose a set $\mc{W}_\pi\subseteq F_\pi$, and define $\mc{S}=\mc{W}_\pi+E$. The set $\mc{W}_\pi$ is referred to as the {\bf window}, and $\mc{S}$ as the {\bf strip}. For each $s\in\R^k/\Z^k,$ we define the {\bf cut and project set} $Y_s\subseteq E$ by
\[Y_s=\pi(\mc{S}\cap(\Z^k+s)).\]
In this situation we refer to $Y_s$ as a {\bf $\mathbf{k}$ to $\mathbf{d}$ cut and project set}.

We adopt the conventional assumption that $\pi|_{\Z^k}$ is injective. We also assume in much of what follows that $E$ is a {\bf totally irrational} subspace of $\R^k$, which means that the canonical projection of $E$ into $\R^k/\Z^k$ is dense. This assumption guarantees that the natural linear $\R^d$ action of $E$ on $\R^k/\Z^k$ is uniquely ergodic.


For the problem of studying linear repetitivity, the $s$ in the definition of $Y_s$ plays only a minor role. If we restrict our attention to points $s$ for which $\Z^k+s$ does not intersect the boundary of $\mc{S}$ (these are called {\bf regular} points and the corresponding sets $Y_s$ are called {\bf regular} cut and project sets) then, as long as $E$ is totally irrational, the sets of finite patches in $Y_s$ do not depend on the choice of $s$. In particular, the property of being LR does not depend on the choice of $s$, as long as $s$ is taken to be a regular point. On the other hand, for points $s$ which are not regular, the cut and project set $Y_s$ may contain `additional' patches coming from points on the boundary, which will make it non-repetitive, and therefore not LR. For this reason, {\it we will always assume that $s$ is taken to be a regular point, and we will simplify our notation by writing $Y$ instead of $Y_s$}.

As a point of reference, when allowing $E$ to vary, we also make use of the fixed subspace $F_\rho=\{0\} \times \R^{k-d}\subseteq \R^k$, and we define $\rho:\R^k\rar E$ and $\rho^*:\R^k\rar F_\rho$ to be the projections onto $E$ and $F_\rho$ with respect to the decomposition $\R^k=E+F_\rho$ (assuming, with little loss of generality, that $\R^k$ does in fact decompose in this way). Our notational use of $\pi$ and $\rho$ is intended to be suggestive of the fact that $F_\pi$ is the subspace which gives the {\em projection} defining $Y$ (hence the letter $\pi$), while $F_\rho$ is the subspace with which we {\em reference} $E$ (hence the letter $\rho$). We write $\mc{W}=\mc{S}\cap F_\rho$, and for convenience we also refer to this set as the {\bf window} defining $Y$. This slight ambiguity should not cause any confusion in the arguments below.

As already mentioned, in much of this paper we will focus our attention on the situation where $\mc{W}$ is taken to be a {\bf cubical window}, given by
\begin{equation}\label{eqn.SquareWindow}
\mc{W}=\left\{\sum_{i=d+1}^{k}t_ie_i:0\le t_i<1\right\}.
\end{equation}
In Section \ref{sec.Canonical} we will also consider the case when $\mc{W}$ is taken to be a {\bf canonical window}, which is defined to be the image under $\rho^*$ of the unit cube in $\R^k$.

For any cut and project set, the collection of points $x\in E$ with the property that $Y+x=Y$ forms a group, called the {\bf group of periods} of $Y$. We say that $Y$ is {\bf aperiodic} if the group of periods is $\{0\}$. Finally, as mentioned in the introduction, we say that $Y$ is a {\bf cubical} (resp. {\bf canonical}) {\bf cut and project set} if it is regular, totally irrational, and aperiodic, and if $\mc{W}$ is a cubical (resp. canonical) window.

Without loss of generality, by permuting the standard basis vectors if necessary, we will assume that $E$ can be written as
\begin{equation*}
E = \{(x,L(x)): x \in \R^d\},
\end{equation*}
where $L: \R^d \to \R^{k-d}$ is a linear function. For each $1\le i\le k-d$, we define the linear form $L_i:\R^d \to \R$ by
\[L_i(x) = L(x)_{i} =\sum_{j=1}^d \alpha_{ij} x_j,\]
and we use the points
$\{\alpha_{ij}\}\in\R^{d(k-d)}$ to parametrize the choice of $E$.

Our proof of Theorem \ref{thm.LRClassification} gives an explicit correspondence between the collection of $k$ to $d$ LR cubical cut and project sets, and the Cartesian product of the following two sets:
\begin{itemize}
\item[(S1)] The set of all $(k-d)$-tuples $(L_1,\ldots ,L_{k-d})$, where each $L_i$ is a badly approximable linear form in $m_i\ge 1$ variables, with the integers $m_i$ satisfying $m_1+\cdots +m_{k-d}=d,$ and\vspace*{.1in}
\item[(S2)] The set of all $d\times d$ integer matrices with non-zero determinant.
\end{itemize}
The fact that the set (S1) is empty when $d<k/2$ implies that, for this range of $k$ and $d$ values, there are no LR cubical (or canonical) cut and project sets. On the other hand, for $d\ge k/2$, there are uncountably many, as implied by the following corollary to Theorem \ref{thm.LRClassification}.
\begin{corollary}\label{cor.HausDim}
For $d<k/2$, there are no LR cubical cut and project sets. For $d\ge k/2$, the collection of $\{\alpha_{ij}\}\in\R^{d(k-d)}$ which define LR cubical cut and project sets is a set with Lebesgue measure $0$ and Hausdorff dimension $d$. Furthermore, these statements also apply to canonical cut and project sets.
\end{corollary}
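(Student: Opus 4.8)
The plan is to deduce Corollary \ref{cor.HausDim} directly from the correspondence furnished by Theorem \ref{thm.LRClassification} between LR cubical cut and project sets and the Cartesian product of the sets (S1) and (S2), together with the bridge to canonical sets provided by Corollary \ref{cor.MLDCubCanon}. First I would record the easy half: if $d < k/2$ then $k - d > d$, so in any decomposition $m_1 + \cdots + m_{k-d} = d$ with each $m_i \ge 1$ we would need $d \ge k-d$, a contradiction; hence (S1) is empty, the correspondence shows there are no LR cubical cut and project sets, and by Corollary \ref{cor.MLDCubCanon} (which relates cubical and canonical windows, cf.\ Theorem \ref{thm.Cub<->Can}) none among canonical ones either.

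For the range $d \ge k/2$ the substance is a Hausdorff-dimension computation. The parameter space is $\R^{d(k-d)}$, with coordinates $\{\alpha_{ij}\}$, and the set of LR parameters decomposes as a finite union over the admissible compositions $(m_1,\dots,m_{k-d})$ of $d$ (there are finitely many) and, for each composition, over the discrete set (S2) of $d \times d$ integer matrices with nonzero determinant (countably many). Since Hausdorff dimension is stable under countable unions, it suffices to fix one composition and one integer matrix and compute the dimension of the corresponding slice. By Theorem \ref{thm.LRClassification} that slice is, up to the fixed invertible linear change of variables given by the integer matrix acting on $\Z^d$, a product $\prod_{i=1}^{k-d} B_{m_i}$, where $B_m \subseteq \R^{m}$ denotes the set of coefficient vectors of badly approximable linear forms in $m$ variables, embedded in the appropriate block of coordinates; the remaining coordinates (those $\alpha_{ij}$ not constrained to lie in a $B_{m_i}$-block, corresponding to the off-diagonal structure forced by condition (LR1)) are free, contributing their full Euclidean dimension. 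The key input is the classical fact that $B_m$ has full Hausdorff dimension $m$ in $\R^m$ (Schmidt) but Lebesgue measure zero (Khintchine--Groshev); combined with the product formula for Hausdorff dimension (and the trivial behaviour of dimension under the fixed bi-Lipschitz coordinate change), each slice has Hausdorff dimension exactly $d(k-d) - (k-d-1) \cdot (\text{something})$ — more precisely, the count of \emph{constrained} coordinates is $\sum_i m_i \cdot 1 = d$ worth of ``badly approximable'' directions distributed so that the total dimension works out to $d$, while Lebesgue-nullness of each $B_{m_i}$ forces the slice to be Lebesgue-null; a countable union of null sets is null.

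Concretely, I would organize the dimension bookkeeping as follows. Condition (LR1) says the kernels of the maps $\mc{L}_i$ have ranks summing to $d(k-d-1)$; after the reduction to (S1)+(S2), each form $L_i$ is (equivalent to) a badly approximable form in $m_i$ variables, so it genuinely depends on $m_i$ of the $d$ integer directions and is trivial (rational, indeed with prescribed rational values forced by (LR1)) on a complementary rank-$(d-m_i)$ sublattice. Summing, the ``active'' coordinates number $\sum_i m_i = d$, and within the $i$-th block the active coefficient vector ranges over $B_{m_i}$, of dimension $m_i$; the inactive coordinates are determined (finitely many choices, parametrized by (S2)) rather than free. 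Hence each slice has Hausdorff dimension $\sum_i \dim_H B_{m_i} = \sum_i m_i = d$, and since it is contained in a countable union of sets each of which is a bi-Lipschitz image of a product with a Lebesgue-null factor $B_{m_i}$, it is Lebesgue-null. Taking the finite union over compositions and the countable union over integer matrices preserves both ``dimension $d$'' (the maximum, attained) and ``measure zero.'' Finally, the canonical case is obtained from Corollary \ref{cor.MLDCubCanon}: the corollary identifies (for this data) LR-ness of the canonical set with LR-ness of the cubical one, so the canonical LR parameter set coincides with (or differs by a null, dimension-$\le d$ set from) the cubical one, giving the same conclusion.

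The main obstacle I anticipate is not any single deep theorem but the careful translation of the algebraic correspondence (S1)$\times$(S2) into a clean geometric description of the LR locus inside $\R^{d(k-d)}$: one must pin down exactly which of the $d(k-d)$ coordinates $\alpha_{ij}$ are forced to take (finitely many) rational values by (LR1) for a given composition and integer matrix, which are constrained to a badly-approximable set, and verify that the change of variables induced by the matrix in (S2) is bi-Lipschitz on the relevant chart so that it preserves Hausdorff dimension and Lebesgue nullity. Once that dictionary is set up, the dimension value $d$ follows from Schmidt's theorem on the full dimension of badly approximable systems of linear forms together with the product rule for Hausdorff dimension, the nullity follows from Khintchine--Groshev, and the passage from cubical to canonical is immediate from Corollary \ref{cor.MLDCubCanon}.
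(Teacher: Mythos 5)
Your proposal is correct and follows essentially the same route as the paper: decompose the LR locus into a countable union indexed by the algebraic data from the (S1)$\times$(S2) correspondence (equivalently, by the choices of the sublattices $\Lambda_i$), bound each slice above by $\sum_i m_i = d$ and get Lebesgue nullity from the Khintchine--Groshev theorem, obtain the matching lower bound from Schmidt's theorem applied to the block-diagonal examples with $\Lambda_i = \Z^{m_i}$, and transfer to the canonical case via Corollary \ref{cor.MLDCubCanon}. The paper's proof in Section \ref{sec.CorHD} is just a terser version of the same bookkeeping.
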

This corollary will be proved in Section \ref{sec.CorHD}. We will also show in Section \ref{sec.Examples} how to exhibit specific examples of LR cut and project sets, for any choice of $d\ge k/2$.\vspace*{.1in}

\noindent {\it Acknowledgements:} Thank you to Oberwolfach for hosting us during the Arbeitsgemeinschaft on Mathematical Quasicrystals, together with a group of people whose input made a significant impact on the final version of this paper. Also, thank you to Franz G\"{a}hler and Uwe Grimm for important observations which helped us to gain some clarity related to the presentation of Section \ref{sec.Canonical}. Finally, we are grateful to Antoine Julien for helpful comments, and for pointing out an error in a previous version of our manuscript.

\section{Definitions and preliminary results}
\subsection{Summary of notation}\label{sec.Notation} For sets $A$ and $B$, the notation $A\times B$ denotes the Cartesian product. If $A$ and $B$ are subsets of the same Abelian group, then $A+B$ denotes the collection of all elements of the form $a+b$ with $a\in A$ and $b\in B$. If $A$ and $B$ are any two Abelian groups then $A\oplus B$ denotes their direct sum.

For $x\in\R,~\{x\}$ denotes the fractional part of $x$ and
$\| x \|$ denotes the distance from $x$ to the nearest integer. For
$x\in\R^m$, we set
$|x|=\max\{|x_1|,\ldots ,|x_m|\}$ and
$\|x\|=\max\{\|x_1\|,\ldots ,\|x_m\|\}.$ We use the symbols $\ll, \gg,$ and $\asymp$ for the standard Vinogradov and asymptotic notation.

The subspaces $E, F_\pi,$ and $F_\rho$, maps $\pi,\rho,$ and $\rho^*$, and sets $\mc{W},\mc {S},$ and $Y$ are defined in the previous section.

Finally, for $y\in Y$ we write $\tilde{y}$ for the point in $\mc{S}\cap\Z^k$ which satisfies $\pi (\tilde{y})=y$. Since $\pi|_{\Z^k}$ is injective, this point is well defined.

\subsection{Results from Diophantine approximation}\label{sec.DiophApp}
Let
$L:\R^d \to \R^{k-d}$ be a linear map given by a matrix with entries
$\{\alpha_{ij}\} \in \R^{d(k-d)}$. For any $N\in\N,$ there exists an $n\in\Z^d$
with $|n|\le N$ and
\begin{equation}\label{eqn.DirichLinForms}
\| L(n) \| \le\frac{1}{N^{d/(k-d)}}.
\end{equation}
This is a multidimensional analogue of Dirichlet's Theorem, which follows from a straightforward application of the pigeonhole principle. We are interested in having an inhomogeneous
version of this result, requiring the values taken by
$\|L(n)-\gamma\|$ to be small, for all choices of
$\gamma\in\R^{k-d}$. For this we will use the following `transference theorem,' a proof of which can be found in \cite[Chapter
V, Section 4]{Cass1957}.
\begin{theorem}\cite[Chapter V, Theorem
VIII]{Cass1957}.\label{thm.CasselsThm}
Given a linear map $L$ as above, the following statements are equivalent:
\begin{itemize}
  \item[(T1)] There exists a constant $C_1>0$ such that
  \[\|L(n)\|\ge \frac{C_1}{|n|^{d/(k-d)}},\]
  for all $n\in\Z^d\setminus\{0\}$.\vspace*{.1in}
  \item[(T2)] There exists a constant $C_2>0$ such that, for all $\gamma\in\R^{k-d},$ the inequalities
  \[\|L(n)-\gamma\|\le \frac{C_2}{N^{d/(k-d)}},\quad |n|\le N,\]
  are soluble, for all $N\ge 1$, with $n\in\Z^d$.
\end{itemize}
\end{theorem}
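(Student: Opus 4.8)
\emph{Proof proposal.} The natural setting is the geometry of numbers, and the plan is to translate both (T1) and (T2) into statements about the single unimodular lattice
\[\Lambda=\bigl\{\,(n,\ L(n)-m)\ :\ n\in\Z^d,\ m\in\Z^{k-d}\,\bigr\}\subseteq\R^d\times\R^{k-d}=\R^k,\]
which has $\det\Lambda=\pm1$ and projects onto $\Z^d$ in the first $d$ coordinates, together with the one-parameter family of boxes $\B_T$ ($T\ge1$) which are $[-T,T]$ in each of the first $d$ coordinates and $[-T^{-d/(k-d)},\,T^{-d/(k-d)}]$ in each of the last $k-d$ coordinates, all of volume $2^k$. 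Write $\lambda_i(\B_T)$ for the $i$-th successive minimum of $\Lambda$ with respect to $\B_T$ and $\mu(\B_T)$ for its covering radius. A direct computation --- for a nonzero $n$ one optimises the parameter $T$ against $|n|$ and $\|L(n)\|$ --- shows that (T1) is equivalent to $\inf_{T\ge1}\lambda_1(\B_T)>0$. Moreover, since $\Lambda$ surjects onto $\Z^d$, for $T\ge1$ the first $d$ coordinates are covered to within $\tfrac12$ automatically, so the covering radius is governed entirely by the spreading of the values $L(n)\bmod 1$, and (T2) is equivalent to $\sup_{T\ge1}\mu(\B_T)<\infty$; the constants $C_1,C_2$ match up with fixed powers of the absolute bounds on $\lambda_1$ and $\mu$. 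Thus the theorem reduces to a transference statement for the family $\{\B_T\}$ of boxes of fixed volume.

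For the direction (T1) $\Rightarrow$ (T2): fixing $T$, Minkowski's second theorem gives $\lambda_1(\B_T)\cdots\lambda_k(\B_T)\asymp_k 1$ (volume $2^k$, covolume $1$), so the hypothesis $\lambda_1(\B_T)\ge c_1$ forces every $\lambda_i(\B_T)\le\lambda_k(\B_T)\le c_1^{-(k-1)}$, and then the elementary bound $\mu(\B_T)\le\tfrac12\sum_i\lambda_i(\B_T)$ bounds $\mu(\B_T)$ purely in terms of $k$ and $c_1$; unwinding the dictionary (small $N$ being handled trivially by $n=0$) yields (T2) with a constant depending only on $k$ and $C_1$. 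For the converse I would argue contrapositively. If (T1) fails then $\lambda_1(\B_T)$ can be made arbitrarily small for a suitable $T$, and the lower half of Minkowski's second theorem, $\lambda_1(\B_T)\cdots\lambda_k(\B_T)\ge 1/k!$, then forces $\lambda_k(\B_T)$ to be arbitrarily large. The crux is the companion inequality $\mu(\B_T)\ge\tfrac12\lambda_k(\B_T)$: let $V$ be the span of the lattice vectors of $\B_T$-gauge strictly less than $\lambda_k(\B_T)$, choose $w\in\Lambda$ with $w\notin 2\Lambda+(\Lambda\cap V)$ (hence $w\notin V$), and note that for every $\ell\in\Lambda$ the vector $2(\tfrac12 w-\ell)=w-2\ell$ lies in $\Lambda\setminus V$ and therefore has $\B_T$-gauge at least $\lambda_k(\B_T)$, so $\tfrac12 w$ is $\B_T$-distance at least $\tfrac12\lambda_k(\B_T)$ from every lattice point. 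Thus the covering radius is arbitrarily large, and tracing the bad point $\tfrac12 w$ back through the dictionary --- translating so that its first $d$ coordinates vanish, which changes $\gamma$ by $L$ evaluated at an integer point --- produces an explicit $\gamma$ and scale $N$ witnessing the failure of (T2).

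The main obstacle is bookkeeping rather than a single hard idea: one must check that every constant produced is genuinely uniform in $T$, and cope with the fact that $\{\B_T\}$ is not closed under dilation --- only a matched pair of rescalings in the two coordinate blocks preserves it --- so the passage between the abstract successive-minima and covering-radius estimates and the concrete inhomogeneous inequalities with their exponent $d/(k-d)$ has to be carried out by choosing $T$ optimally at each step. It is also worth noting that the soft counting heuristic --- there are $\asymp N^d$ values $L(n)\bmod 1$ with $|n|\le N$, hence at best $\asymp N^{-d/(k-d)}$-dense --- only recovers the trivial lower bound, and that a single pigeonhole step provably does not suffice for the nontrivial direction (T1) $\Rightarrow$ (T2): the Minkowski-theoretic input is essential. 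This is, in essence, the classical argument of \cite[Ch.~V]{Cass1957}.
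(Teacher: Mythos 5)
The paper offers no proof of this statement at all --- it is quoted verbatim from Cassels with a citation --- so there is nothing internal to compare against; judged on its own, your argument is correct. Your dictionary is right: with $\Lambda=\{(n,L(n)-m)\}$ unimodular and $\B_T$ of volume $2^k$, points with $n=0$ have $\B_T$-gauge at least $T^{d/(k-d)}\ge 1$, so (T1) really is equivalent to $\inf_{T\ge1}\lambda_1(\B_T)>0$ after optimising $T\asymp|n|$, and the surjection onto $\Z^d$ reduces the covering radius to the inhomogeneous spacing of $L(n)\bmod 1$, making (T2) equivalent to $\sup_{T\ge1}\mu(\B_T)<\infty$. The engine is then the two-sided comparison $\tfrac12\lambda_k\le\mu\le\tfrac12\sum_i\lambda_i$ combined with Minkowski's second theorem $1/k!\le\lambda_1\cdots\lambda_k\le1$, and your proof of the lower half (choosing $w\notin 2\Lambda+(\Lambda\cap V)$, which exists because $\Lambda\cap V$ has rank at most $k-1$, and noting $w-2\ell\notin V$ for every $\ell$) is sound. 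This is a genuinely different route from Cassels' own: his Chapter~V argument passes through the \emph{transposed} (dual) system of linear forms and combines a homogeneous transference between $L$ and $L^{\mathrm{T}}$ with an inhomogeneous-to-homogeneous transference for the dual, whereas you stay with a single lattice and a one-parameter family of boxes. What your version buys is conceptual economy and explicit, uniform constants; what it costs is exactly the bookkeeping you flag, namely verifying that every estimate is uniform in $T$ and that the final translation (replacing $p_1$ by its nearest integer vector $a$ and setting $\gamma=p_2-L(a)$, $N\asymp MT$) converts an unbounded covering radius into a concrete failure of (T2) --- which it does, with $C_2\gg M^{k/(k-d)}$.
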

Next, with a view towards applying this theorem, let $\mathcal{B}_{d,k-d}$
denote the collection of numbers $\alpha\in\R^{d(k-d)}$ with the
property that there exists a constant $C=C(\alpha)>0$ such that, for
all nonzero integer vectors $n\in\Z^d$,
\[\|L(n)\|\ge\frac{C}{|n|^{d/(k-d)}}.\]
The Khintchine-Groshev Theorem (see \cite{BereDickVela2006} for a detailed statement and proof) implies that the Lebesgue measure of $\mathcal{B}_{d,k-d}$ is
$0$. However in terms of Hausdorff dimension these sets are large. It
is a classical result of Jarnik that $\dim \mc{B}_{1,1}=1$, and this
was extended by Wolfgang Schmidt, who showed in \cite[Theorem
2]{Schm1969} that, for any choices of $1\le d<k,$
\[\dim\mc{B}_{d,k-d}=d(k-d).\]

Finally, we introduce the definition of relatively badly approximable linear forms. As mentioned in the introduction, these are linear forms that are badly approximable when restricted to rational subspaces complementary to their kernels. To be precise, suppose that $L:\R^d\rar\R$ is a single linear form in $d$ variables, and define $\mc{L}:\Z^d\rar\R/\Z$ by $\mc{L}(n)=L(n)~\mathrm{mod}~1$. Let $S\leqslant\Z^d$ be the kernel of $\mc{L}$, and write $r=\mathrm{rk}(S)$ and $m=d-r$. We say that $L$ is {\bf relatively badly approximable} if $m>0$ and if there exists a constant $C>0$ and a group $\Lambda\leqslant\Z^d$ of rank $m$, with $\Lambda\cap S=\{0\}$ and
\[\|\mc{L}(\lambda)\|\ge\frac{C}{|\lambda|^{m}}\quad\text{for all}\quad\lambda\in\Lambda\setminus\{0\}.\]
Now suppose that $L$ is relatively badly approximable and let $\Lambda$ be a group satisfying the condition in the definition. Let $F\subseteq\Z^d$ be a complete set of coset representatives for $\Z^d/(\Lambda+S)$. We have the following lemma.
\begin{lemma}\label{lem.RelBad1}
  Suppose that $L$ is relatively badly approximable, with $\Lambda$ and $F$ as above. Then there exists a constant $C'>0$ such that, for any $\lambda\in\Lambda$ and $f\in F$, with $\mc{L}(\lambda+f)\not=0$, we have that
  \[\|\mc{L}(\lambda+f)\|\ge\frac{C'}{1+|\lambda|^{m}}.\]
\end{lemma}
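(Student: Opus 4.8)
The plan is to transfer the defining inequality for relative bad approximability from $\Lambda$ to the representatives $\lambda+f$, at the cost of an auxiliary bounded integer factor. The first point is that, since $\mathrm{rk}(\Lambda)=m$ and $\mathrm{rk}(S)=d-m$ with $\Lambda\cap S=\{0\}$, the subgroup $\Lambda\oplus S\leqslant\Z^d$ has full rank $d$, hence finite index $N:=[\Z^d:\Lambda\oplus S]$. In particular $F$ is finite, and $N\Z^d\subseteq\Lambda\oplus S$, so for each $f\in F$ I can fix a decomposition $Nf=\mu_f+\sigma_f$ with $\mu_f\in\Lambda$ and $\sigma_f\in S$; set $M:=\max_{f\in F}|\mu_f|$, a constant depending only on the given data.

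Next I would compute, in $\R/\Z$ and using $S=\ker\mc{L}$ together with the linearity of $\mc{L}$, that
\[N\,\mc{L}(\lambda+f)=\mc{L}(N\lambda+Nf)=\mc{L}(N\lambda+\mu_f+\sigma_f)=\mc{L}(N\lambda+\mu_f),\qquad N\lambda+\mu_f\in\Lambda.\]
If $N\lambda+\mu_f=0$ then $N\,\mc{L}(\lambda+f)=0$, so $\mc{L}(\lambda+f)\in\tfrac1N\Z/\Z$; being nonzero by hypothesis, its distance to the integers is then at least $1/N\ge\tfrac{1/N}{1+|\lambda|^m}$. If instead $N\lambda+\mu_f\ne0$, then $N\lambda+\mu_f\in\Lambda\setminus\{0\}$ and the definition of relative bad approximability gives
\[\|\mc{L}(N\lambda+\mu_f)\|\ge\frac{C}{|N\lambda+\mu_f|^m}\ge\frac{C}{(N|\lambda|+M)^m}\gg\frac{1}{1+|\lambda|^m},\]
where the last inequality uses $(N|\lambda|+M)^m\le(N+M)^m(1+|\lambda|)^m\ll 1+|\lambda|^m$, with implied constants depending only on $m$, $N$, $M$.

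Finally I would pull the factor of $N$ back out using the elementary bound $\|Nx\|\le N\|x\|$, valid for every $x\in\R/\Z$: since $\|N\,\mc{L}(\lambda+f)\|=\|\mc{L}(N\lambda+\mu_f)\|$, in the second case we obtain $N\,\|\mc{L}(\lambda+f)\|\gg\tfrac{1}{1+|\lambda|^m}$, hence $\|\mc{L}(\lambda+f)\|\ge\tfrac{C'}{1+|\lambda|^m}$ for a suitable $C'>0$, and taking the minimum of this $C'$ with $1/N$ handles both cases at once. There is no serious obstacle here; the only thing to watch is the uniformity, which is exactly why one needs $F$ (and hence the set $\{\mu_f\}$ and the constant $M$) to be finite — and this finiteness is guaranteed by the full-rank observation about $\Lambda\oplus S$ at the start.
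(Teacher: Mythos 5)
Your proof is correct and follows essentially the same route as the paper's: both clear denominators by multiplying $\lambda+f$ by a positive integer that pushes $f$ into $\Lambda+S$ (you use the uniform index $N=[\Z^d:\Lambda\oplus S]$, the paper uses the order $u_f$ of each $f$ in $\Z^d/(\Lambda+S)$), then apply the defining inequality on $\Lambda$ and divide back via $\|Nx\|\le N\|x\|$. Your explicit $1/N$ bound in the degenerate case $N\lambda+\mu_f=0$ is a slightly cleaner way of handling what the paper dismisses as "finitely many possibilities," but the substance is identical.
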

\begin{proof}
  Any element of $F$ has finite order in $\Z^d/(\Lambda+S)$. Therefore, for each $f\in F$  there is a positive integer $u_f$, and elements $\lambda_f\in\Lambda$ and $s_f\in S$, for which
  \[f=\frac{\lambda_f+s_f}{u_f}.\]
  If $\mc{L}(\lambda+f)\not= 0$ then either $\lambda+f=s_f/u_f\not=0$, or $\lambda+u_f^{-1}\lambda_f\not=0$. The first case only pertains to finitely many possibilities, and in the second case we have that
  \begin{align*}
    \|\mc{L}(\lambda+f)\|&\ge u_f^{-1}\cdot\|\mc{L}(u_f\lambda+\lambda_f+s_f)\|\\
    &=u_f^{-1}\cdot\|\mc{L}(u_f\lambda+\lambda_f)\|\\
    &\ge\frac{C}{u_f|u_f\lambda+\lambda_f|^m}.
  \end{align*}
  Therefore, replacing $C$ by an appropriate constant $C'>0$, and using the fact that $F$ is finite, finishes the proof.
\end{proof}
We can also deduce that if $L$ is relatively badly approximable, then the group $\Lambda$ in the definition may be replaced by any group $\Lambda'\leqslant\Z^d$ which is complementary to $S$. This is the content of the following lemma.
\begin{lemma}\label{lem.RelBad2}
Suppose that $L$ is relatively badly approximable. Then, for any group $\Lambda'\leqslant\Z^d$ of rank $m$, with $\Lambda'\cap S=\{0\},$ there exists a constant $C'>0$ such that
\[\|\mc{L}(\lambda')\|\ge\frac{C'}{|\lambda'|^{m}}\quad\text{for all}\quad\lambda'\in\Lambda'\setminus\{0\}.\]
\end{lemma}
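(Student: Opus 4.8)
The plan is to reduce the statement about an arbitrary complementary group $\Lambda'$ to the hypothesis about the particular group $\Lambda$ coming from the definition, using the fact that $\Lambda$ and $\Lambda'$ are commensurable modulo $S$. Concretely, since both $\Lambda$ and $\Lambda'$ have rank $m=d-r$ and both are complementary to $S$ (which has rank $r$), the group $\Lambda+S$ and the group $\Lambda'+S$ are both finite-index subgroups of $\Z^d$. Hence their intersection $(\Lambda+S)\cap(\Lambda'+S)$ also has finite index, and in particular $\Lambda'\subseteq \Lambda'+S$ is contained in $\Lambda+S$ up to finite index: more precisely, there is a positive integer $u$ such that $u\Lambda'\subseteq \Lambda+S$. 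First I would fix such a $u$, so that every $\lambda'\in\Lambda'$ satisfies $u\lambda'=\lambda+s$ for some $\lambda\in\Lambda$ and $s\in S$, with $\lambda$ and $s$ uniquely determined by $\lambda'$ since $\Lambda\cap S=\{0\}$.

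Next I would estimate. For $\lambda'\in\Lambda'\setminus\{0\}$ write $u\lambda'=\lambda+s$ as above. Then $\mc{L}(u\lambda')=\mc{L}(\lambda)$ because $s\in S=\ker\mc{L}$, so
\[\|\mc{L}(\lambda')\|\ge u^{-1}\|\mc{L}(u\lambda')\|=u^{-1}\|\mc{L}(\lambda)\|.\]
If $\lambda\ne 0$ the defining property of $\Lambda$ gives $\|\mc{L}(\lambda)\|\ge C/|\lambda|^m$, so it remains to control $|\lambda|$ in terms of $|\lambda'|$. Here I would use the linear-algebra fact that on the rational subspace $S\otimes\Q$ the decomposition $\R^d = (S\otimes\R)\oplus(\Lambda\otimes\R)$ yields a bounded projection operator: the map sending $u\lambda'$ to its $\Lambda\otimes\R$-component is linear, hence $|\lambda|\ll |u\lambda'| = u|\lambda'| \ll |\lambda'|$, with implied constant depending only on the two fixed groups. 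Combining, $\|\mc{L}(\lambda')\|\gg 1/|\lambda'|^m$, which is the desired bound (after absorbing constants into a single $C'$).

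The one remaining case is $\lambda=0$, i.e. $u\lambda' = s \in S$. I would argue this forces $\lambda'=0$: indeed $u\lambda'\in S$ means $\lambda'\in (S\otimes\Q)\cap(\Lambda'\otimes\Q)$, but $\Lambda'\cap S=\{0\}$ together with $\mathrm{rk}(\Lambda')+\mathrm{rk}(S)=d$ implies $\Lambda'\otimes\Q$ and $S\otimes\Q$ are complementary as $\Q$-subspaces, so their intersection is trivial; hence $\lambda'=0$, contradicting $\lambda'\ne 0$. Thus for nonzero $\lambda'$ we always land in the case $\lambda\ne 0$ and the estimate above applies uniformly.

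I do not expect any serious obstacle here; the argument is essentially bookkeeping about commensurable lattices and the boundedness of a fixed linear projection. The only point requiring a little care is making sure the integer $u$ and the implied constant relating $|\lambda|$ to $|\lambda'|$ depend only on $\Lambda$, $\Lambda'$, and $S$ — not on $\lambda'$ — which is immediate once one observes that both are determined by the fixed finite-index sublattice $u\Lambda' \subseteq \Lambda+S$ and the fixed direct sum decomposition. I would also remark that, by symmetry, this lemma shows the property of being relatively badly approximable (and the associated constant, up to a multiplicative factor) is independent of the choice of complementary group $\Lambda$, which is presumably why it is being recorded at this point.
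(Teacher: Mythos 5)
Your argument is correct and is essentially the paper's own proof: both reduce $\lambda'\in\Lambda'$ to an element of $\Lambda$ via the commensurability of $\Lambda'$ with $\Lambda+S$, use $\|\mc{L}(u\lambda')\|\le u\|\mc{L}(\lambda')\|$ together with $S=\ker\mc{L}$, and then compare $|\lambda|$ with $|\lambda'|$ by a fixed linear projection. The only cosmetic difference is that the paper clears denominators basis vector by basis vector (producing the factor $u_1\cdots u_m$) where you use a single integer $u$ with $u\Lambda'\subseteq\Lambda+S$; your explicit treatment of the degenerate case $\lambda=0$ matches the paper's observation that $\Lambda'\cap S=\{0\}$ rules it out.
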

\begin{proof}
Let $\Lambda$ be the group in the definition of relatively badly approximable. Choose a basis $v_1,\ldots ,v_m$ for $\Lambda'$, and for each $1\le j\le m$ write
\[v_j=\frac{\lambda_j+s_j}{u_j},\]
with $\lambda_j\in\Lambda, s_j\in S,$ and $u_j\in\N$.

Each $\lambda'\in\Lambda'$ can be written in the form
\[\lambda'=\sum_{j=1}^ma_jv_j,\]
with integers $a_1,\ldots ,a_m$, and we have that
\[\left\|\mc{L}\left(\lambda'\right)\right\|\ge (u_1\cdots u_m)^{-1}\left\|\mc{L}\left(\sum_{j=1}^mb_j\lambda_j\right)\right\|,\]
with $b_j=a_ju_1\cdots u_m/u_j\in\Z$ for each $j$. If the integers $a_j$ are not identically $0$ then, since $\Lambda'\cap S=\{0\}$, it follows that
\[\lambda:=\sum_{j=1}^mb_j\lambda_j\not=0.\]
Using the relatively badly approximable hypothesis gives that
\[\|\mc{L}(\lambda')\|\ge \frac{C}{u_1\cdots u_m\cdot |\lambda|^m}.\]
Finally since $|\lambda|\ll|\lambda'|$, we have that
\[\frac{C}{u_1\cdots u_m\cdot |\lambda|^m}\ge\frac{C'}{|\lambda'|^m},\]
for some constant $C'>0$.
\end{proof}

\subsection{Patterns in cut and project sets}
Let $F_\rho, \rho, \rho^*,$ and $\tilde{y}$ be defined as in Section \ref{sec.Notation}. Assume that we are given a bounded convex set $\Omega\subseteq E$
which contains a neighborhood of $0$ in $E$. Then, for each $r\ge 0$,
define the {\bf patch of size $\mathbf{r}$ at $\mathbf{y}$}, by
\[P(y,r):=\{ y' \in Y: \rho (\tilde{y'}-\tilde y) \in r\Omega\}.\]
In other words, $P(y,r)$ consists of the projections (under $\pi$) to $Y$ of all points of $\mc{S}$ whose first $d$ coordinates are in a certain neighborhood of the first $d$ coordinates of $\tilde y$.

We remark that there are several different definitions of `patches of size $r$' in the literature. For example, from the point of view of $Y$ being contained in $E$, it is more natural to define a patch of size $r$ at $y$ to be the collection of points of $Y$ which lie within distance $r$ of $y$. In \cite{HaynKoivSaduWalt2015} we considered this definition of patch (what we called there type 1 patches), together with the definition that we have given above (type 2 patches). In fact, the two definitions of patches agree except possibly on a constant neighborhood of their boundaries (see \cite[Equation (4.1)]{HaynKoivSaduWalt2015}). Therefore if $Y$ is LR for one definition of `patch of size $r$', it will be LR for the other, and similarly for other reasonable definitions.

For $y_1,y_2\in Y$, we say that $P(y_1,r)$ and
$P(y_2,r)$ are equivalent if
\[P(y_1,r)=P(y_2,r)+y_1-y_2.\] This
defines an equivalence relation on the collection of patches of size $r$. We denote the equivalence class of the patch of size $r$ at $y$ by $\mc{P}(y,r)$.

As indicated in the introduction, a cut and project set $Y$ is {\bf linearly repetitive (LR)} if there is a $C>0$ such that, for every $r>0$, every ball of size $Cr$ in $E$ contains a representative from every equivalence class of patches of size $r$. There are two technical points which will ease our discussion below. First of all, since the points of $Y$ are relatively dense, in the above definition we are free to restrict our attention, without loss of generality (by increasing $C$ if necessary), to balls of size $Cr$ centered at points of $Y$. Secondly, the property of being LR does not depend on the choice of $\Omega$ used to define the patches. This follows from the fact that, if $\Omega'\subseteq E$ is  any other bounded convex set which contains a neighborhood of $0$, then there are dilations of $\Omega'$ which contain, and which are contained in, $\Omega$.

Let $\mc{W}=\mc{S}\cap F_\rho$.  There is a
natural action of $\Z^k$ on $F_\rho$, given by
\[n.w=\rho^*(n)+w = w + (0,n_2-L(n_1)),\]
for $n=(n_1,n_2)\in\Z^k = \Z^d \times
\Z^{k-d}$ and $w\in F_\rho$. For each $r\ge 0$ we define the
{\bf $\mathbf{r}$-singular points} of $\mc{W}$ by
\[\mathrm{sing}(r):=\mc{W}\cap\left((-\rho^{-1}(r\Omega)\cap\Z^k). \partial\mc{W}\right),\]
and the {\bf $\mathbf{r}$-regular points} by
\[\mathrm{reg}(r):=\mc{W}\setminus\mathrm{sing}(r).\]
The following result follows from the proof of \cite[Lemma 3.2]{HaynKoivSaduWalt2015} (see also \cite{Juli2010}).
\begin{lemma}\label{lem.ConnComp}
Suppose that $\mc{W}$ is a parallelotope
generated by integer vectors. For every equivalence class $\mc{P}=\mc{P}(y,r)$, there is a unique connected component $U$ of $\mathrm{reg}(r)$ with the property that, for any $y'\in Y$,
\[\mc{P}(y',r)=\mc{P}(y,r)~\text{ if and only if }~ \rho^*(\tilde{y'})\in U.\]
\end{lemma}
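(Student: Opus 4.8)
The plan is to reduce the statement to a combinatorial fact about the position of $\rho^*(\tilde y)$ relative to $\mc W$ and finitely many of its integer translates, and then to use the lattice structure to convert that fact into a statement about a product grid. First I would record the dictionary: since $\mc S$ is invariant under translation by $E$, a point $p\in\R^k$ lies in $\mc S$ if and only if $\rho^*(p)\in\mc W$, and combining this with the injectivity of $\pi|_{\Z^k}$ lets me rewrite
\[P(y,r)=\bigl\{\,y+\pi(n):n\in\Z^k,\ \rho(n)\in r\Omega,\ \rho^*(\tilde y)+\rho^*(n)\in\mc W\,\bigr\}.\]
Hence $\mc{P}(y,r)$ depends on $y$ only through the finite ``acceptance pattern''
\[\Phi_r(w):=\bigl\{\,n\in\Z^k:\rho(n)\in r\Omega,\ w+\rho^*(n)\in\mc W\,\bigr\},\qquad w=\rho^*(\tilde y),\]
and $\mc{P}(y_1,r)=\mc{P}(y_2,r)$ if and only if $\Phi_r(\rho^*(\tilde y_1))=\Phi_r(\rho^*(\tilde y_2))$. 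Regularity of $Y$ guarantees that $\rho^*(\tilde y)$ never lies on $\partial(\mc W-\rho^*(n))$ for a relevant $n$, i.e. $\rho^*(\tilde y)\in\mathrm{reg}(r)$, and (via the index $n=0$) that $\mathrm{reg}(r)$ is contained in the interior of $\mc W$.

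Next I would dispose of the easy half: if $\rho^*(\tilde y)$ and $\rho^*(\tilde{y'})$ lie in the same connected component $U$ of $\mathrm{reg}(r)$, then $\mc{P}(y,r)=\mc{P}(y',r)$. Indeed, for each of the finitely many relevant $n$ the condition ``$w+\rho^*(n)\in\mc W$'' is locally constant in $w$ on $\mathrm{reg}(r)$ — there $w+\rho^*(n)$ lies either in the interior of $\mc W$ or in the complement of its closure, both open conditions — so $\Phi_r$ is locally constant on $\mathrm{reg}(r)$ and hence constant on $U$.

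The crux is the converse: distinct components of $\mathrm{reg}(r)$ must carry distinct patterns. The key observation is that the last $k-d$ standard basis vectors lie in $F_\rho\cap\Z^k$, so $\rho(n)$ depends only on the first $d$ coordinates of $n$ while $\rho^*$ is translated by the whole lattice $\{0\}\times\Z^{k-d}$ when $n$ is; consequently $S_r:=\{p\in\Z^d:(p,L(p))\in r\Omega\}$ is finite, the relevant indices are exactly $\{(p,q):p\in S_r,\ q\in\Z^{k-d}\}$, and the relevant internal translates $\{\rho^*(n):\rho(n)\in r\Omega\}$ form a finite union of cosets of the lattice $\Lambda$ generated by the edge vectors of $\mc W$ (which is $\{0\}\times\Z^{k-d}$ in the cubical case). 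For each such coset $\kappa$ the translates $\{\mc W-v:v\in\kappa\}$ \emph{tile} $F_\rho$, so each $w\in\mathrm{reg}(r)$ lies in exactly one of them; since $\rho^*|_{\Z^k}$ is injective (aperiodicity gives $E\cap\Z^k=\{0\}$), $\Phi_r(w)$ is equivalent to the tuple recording, for each coset $\kappa$, which tile $\mc W-v(w,\kappa)$ contains $w$. Passing to coordinates in which $\mc W$ is the unit cube $[0,1)^{k-d}$, this identifies $\mathrm{reg}(r)$ with a product $\prod_{i}\bigl((0,1)\setminus D_i\bigr)$ of punctured open intervals ($D_i$ finite), and the tile data comprising $\Phi_r(w)$ is seen to determine, for every $i$ and every $p\in D_i$, on which side of $p$ the coordinate $w_i$ lies — hence which product of subintervals (that is, which connected component) contains $w$. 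So $\Phi_r(w)=\Phi_r(w')$ forces $w,w'$ into the same component. This is the step I expect to be the main obstacle: the naive approach of joining two same-pattern points by a segment and tracking wall crossings breaks down, because a segment can enter and leave a single translated window twice (convexity only bounds this by two), so individual membership bits can toggle and toggle back; it is precisely the $\Lambda$-periodicity of the relevant translates, which makes the singular set split as a product grid, that renders the claim transparent, and carrying this out when $\mc W$ is a general integer parallelotope rather than a cube requires a routine but careful change of coordinates and bookkeeping of the finitely many cosets.

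Finally I would assemble the lemma: given a class $\mc P=\mc{P}(y,r)$, take $U$ to be the connected component of $\mathrm{reg}(r)$ containing $\rho^*(\tilde y)$. For $y'\in Y$, the dictionary together with the easy half and the crux give $\mc{P}(y',r)=\mc P\Leftrightarrow\Phi_r(\rho^*(\tilde{y'}))=\Phi_r(\rho^*(\tilde y))\Leftrightarrow\rho^*(\tilde{y'})\in U$, which is the claimed equivalence; and if $U'$ satisfies the same equivalence then, applying it to $y'=y$, we get $\rho^*(\tilde y)\in U'$, forcing $U'=U$.
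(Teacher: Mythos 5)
Your proof is correct, and it is essentially the standard acceptance-domain argument that the paper itself does not reproduce but defers to the proof of \cite[Lemma 3.2]{HaynKoivSaduWalt2015}: a patch class is the data of which translates $\mc{W}-\rho^*(n)$ contain $\rho^*(\tilde y)$, and because those translates come in finitely many cosets of the lattice spanned by the edges of the integer parallelotope $\mc{W}$, the singular set is a union of full ``grid hyperplanes'' and $\mathrm{reg}(r)$ splits as a product, which is exactly the point that defeats the naive segment-tracking argument. The only cosmetic slip is the phrase ``finitely many relevant $n$'' in the easy half --- the set $\{n\in\Z^k:\rho(n)\in r\Omega\}$ is infinite in the $F_\rho$-directions, as you yourself note later --- but this is harmless since local constancy is verified for each index $n$ separately.
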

For each equivalence class $\mc{P}=\mc{P}(y,r)$ we define
$\xi_{\mc{P}}$, the {\bf frequency of $\mathbf{\mc{P}}$}, by
\[\xi_{\mc{P}}:=\lim_{R\rar\infty}\frac{\#\{y'\in Y:|y'|\le
R,~\mc{P}(y',r)=\mc{P}(y,r)\}}{\#\{y'\in Y:|y'|\le R\}}.\] It is
not difficult to show that, in our setup, the limit defining
$\xi_{\mc{P}}$ always exists. Lemma \ref{lem.ConnComp}, combined with the Birkhoff Ergodic Theorem, proves that, for totally irrational $E$, the frequencies of equivalence classes of patches are given by the volumes of connected components of $\mathrm{reg}(r)$.
\begin{lemma}\label{lem.FreqVol}
If $E$ is totally irrational then for any $r>0$ and any equivalence class $\mc{P}=\mc{P}(y,r)$, the frequency $\xi_\mc{P}$ is equal to the volume of the connected component $U$ in the statement of Lemma \ref{lem.ConnComp}.
\end{lemma}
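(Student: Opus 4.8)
The plan is to deduce the lemma from a single equidistribution statement: as $R\to\infty$, the probability measures $\mu_R:=\tfrac{1}{N(R)}\sum_{y'}\delta_{\rho^*(\tilde{y'})}$ on $\mc{W}$ — where the sum runs over the $y'\in Y$ with $|\rho(\tilde{y'})|\le R$ and $N(R)$ is their number — converge weak-$*$ to the Lebesgue measure on $\mc{W}$ normalized to total mass one. Granting this, the lemma follows quickly. By Lemma \ref{lem.ConnComp} the condition $\mc{P}(y',r)=\mc{P}(y,r)$ is equivalent to $\rho^*(\tilde{y'})\in U$, and $\partial U$ lies in a finite union of affine hyperplanes (coming from $\partial\mc{W}$ and from the translates of facets of $\partial\mc{W}$ that comprise $\mathrm{sing}(r)$), hence is Lebesgue-null in $F_\rho$; so evaluating $\mu_R$ on $\mathbf{1}_U$ and applying the portmanteau theorem,
\begin{equation*}
\xi_{\mc{P}}=\lim_{R\to\infty}\frac{\#\{y'\in Y:|y'|\le R,\ \mc{P}(y',r)=\mc{P}(y,r)\}}{\#\{y'\in Y:|y'|\le R\}}=\frac{\mathrm{vol}(U)}{\mathrm{vol}(\mc{W})},
\end{equation*}
where I have also used that each $\tilde{y'}$ lies in the strip $\mc{S}$, so that $|y'-\rho(\tilde{y'})|=|\pi(\tilde{y'})-\rho(\tilde{y'})|$ is bounded by a constant depending only on the window; since $Y$ is a Delone set, replacing $|\rho(\tilde{y'})|\le R$ by $|y'|\le R$ changes numerator and denominator by $O(R^{d-1})=o(R^d)$, so the two normalizations give the same limit. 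In the normalization in force here $\mathrm{vol}(\mc{W})=1$ (the cubical window being $\{0\}\times[0,1)^{k-d}$), so $\xi_{\mc{P}}=\mathrm{vol}(U)$; taking $U=\mc{W}$ in the same computation also establishes the existence of the limit defining $\xi_{\mc{P}}$.

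The real content is the equidistribution of the $\mu_R$, and this is where total irrationality — equivalently, unique ergodicity of the $\R^d$-action of $E$ on $\T^k=\R^k/\Z^k$ — is used. Unwinding the definitions of $Y_s$ and of $\tilde{y'}$, the map $y'\mapsto(s-\rho(\tilde{y'}))\bmod\Z^k$ is a bijection from $Y$ onto the set of intersections of the $E$-orbit of $\bar{s}:=s\bmod\Z^k$ with the transversal $\Xi:=\mc{W}\bmod\Z^k\subseteq\T^k$; under it $\rho^*(\tilde{y'})$ becomes the representative in $\mc{W}$ of the intersection point hit, while $\rho(\tilde{y'})$ becomes, up to sign, the ``time'' $x\in E$ of the hit. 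Since $E\cap F_\rho=\{0\}$, the plaque $\Xi$ is transverse to the flow, and the orbit meets it in a uniformly discrete subset of $E$ (both $Y$ and this set of hit times being projections along $F_\rho$ of the uniformly discrete set $\mc{S}\cap(\Z^k+s)$). The desired statement is then the standard principle that a uniquely ergodic $\R^d$-flow, sampled along such a transversal, produces samples equidistributed for the induced transverse measure — here a fixed scalar multiple of Lebesgue measure on $\mc{W}$. To prove it one fattens $\Xi$ to a thin flow-box $\Xi+Q_\varepsilon$, with $Q_\varepsilon\subseteq E$ a small cube, so that for any $W'\subseteq\mc{W}$ with Lebesgue-null boundary the set $\{x\in E:\bar{s}+x\in(W'\bmod\Z^k)+Q_\varepsilon\}$ is exactly the union of the translates $x_0+Q_\varepsilon$ over the discretely many hits $x_0$ of $W'$; applying the uniform Birkhoff theorem (valid by unique ergodicity) to the indicator of $(W'\bmod\Z^k)+Q_\varepsilon$ shows that $\mathrm{vol}(B_R)^{-1}\int_{B_R}\mathbf{1}_{(W'\bmod\Z^k)+Q_\varepsilon}(\bar{s}+x)\,dx$ tends to the Haar measure of that flow-box as $R\to\infty$; and letting $\varepsilon\to0$, the $\varepsilon$-dependent normalizing constants are common to all $W'$ and cancel upon dividing by the case $W'=\mc{W}$.

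The step I expect to require the most care is this fattening-and-double-limit argument. One must verify that $\mathrm{Haar}\big((W'\bmod\Z^k)+Q_\varepsilon\big)$ equals a fixed constant times $\mathrm{vol}(Q_\varepsilon)\mathrm{vol}(W')$ up to an error $o(\mathrm{vol}(Q_\varepsilon))$ that is uniform over the relevant $W'$, and that the number of hits within bounded distance of $\partial B_R$ is $o(R^d)$ — both of which reduce to $Y$, and hence the set of hit times, being uniformly discrete and of bounded density. A secondary point is that unique ergodicity is usually stated for continuous test functions, whereas $\mathbf{1}_{(W'\bmod\Z^k)+Q_\varepsilon}$ is an indicator; this is harmless because the boundary of this flow-box is Haar-null (again since $\partial U$ and $\partial\mc{W}$ lie in finite unions of hyperplane pieces), so its average is squeezed between those of continuous functions. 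It is also precisely here that the regularity of the base point $s$, together with the half-open convention for $\mc{W}$, guarantees that every $y'$ is assigned unambiguously to a single component $U$.
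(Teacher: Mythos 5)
Your argument is correct and is essentially the approach the paper itself indicates: it reduces the statement, via Lemma \ref{lem.ConnComp}, to equidistribution of the points $\rho^*(\tilde{y'})$ in $\mc{W}$, which you derive from unique ergodicity of the $\R^d$-action of $E$ on $\R^k/\Z^k$ (the uniform Birkhoff theorem) by a standard flow-box/transversal argument. The paper does not write out these details, instead citing \cite[Lemma 3.2]{HaynKoivSaduWalt2015} and describing the proof exactly as ``Lemma \ref{lem.ConnComp} combined with the Birkhoff Ergodic Theorem,'' so your write-up supplies the same proof in full.
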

This lemma is the full content of \cite[Lemma 3.2]{HaynKoivSaduWalt2015} (this idea is also implicit in \cite{BertVuil2000}).

\subsection{Subadditive ergodic theorems and (PQ)}
In order to facilitate the proof of Theorem \ref{thm.PQSuff}, we briefly gather together some definitions and results from \cite{BesbBoshLenz2013}.

Let $\mc{P}$ be an equivalence class of patches of size $r$, for some $r>0$, and let $B$ be a bounded subset of $E$. Write $\#'_{\mc{P}}B$ for the maximum number of disjoint patches of size $r$ in $B$ which are in the equivalence class $\mc{P}$, and define
\[\nu'(\mc{P})=r^d\cdot\liminf_{|C|\rar\infty}\frac{\#'_\mc{P}C}{|C|},\]
where $C$ runs over all cubes in $E$. We say that $Y$ satisfies {\bf condition (PQ)} if
\[\inf_{\mc{P}}\nu'(\mc{P})>0,\]
where the infimum is taken over all equivalence classes of patches, for all $r>0$.

Let $\mc{B}$ denote the collection of all bounded subsets of $\R^d$, and suppose that $F$ is a function from $\mc{B}$ to $\R$. We say that $F$ is subadditive if, for any disjoint sets $B_1,B_2\in\mc{B}$, we have the inequality
\[F(B_1\cup B_2)\le F(B_1)\cup F(B_2).\]
We say that $F$ is $Y$-invariant if
\[F(B)=F(x+B),\]
whenever $x+(B\cap Y)=(x+B)\cap Y.$ Finally, we say that $Y$ satisfies a {\bf subadditive ergodic theorem} if, for all subadditive $Y$-invariant functions $F$, the limit
\[\lim_{|C|\rar\infty}\frac{F(C)}{|C|}\]
exists. As above, the limit is taken over all cubes $C\subseteq E$.

It follows from \cite[Theorem 1]{BesbBoshLenz2013} that $Y$ satisfies a subadditive ergodic theorem if and only if it satisfies condition (PQ).

\section{Proof of Theorem \ref{thm.LRClassification}}\label{sec.LRClass}
We assume that $\mc{W}$ is given by \eqref{eqn.SquareWindow} and we identify $\mc{W}$ with a subset of $\R^{k-d}$, in the obvious way. Recall that if $Y$ is LR with respect to one convex patch shape $\Omega$, then it is LR with respect to all convex patch shapes. The precise shape $\Omega$ which we will use will be specified later in the proof, but until then everything we say will apply to any fixed choice of such a shape.

For $r>0$ let $c(r)$ denote the number of equivalence classes of patches of size $r$. If $Y$ is LR then there exists a constant $C>0$ such that $c(r)$ is bounded above by $Cr^d$, for all $r>0$. For the first part of the proof of Theorem \ref{thm.LRClassification} we will show that condition (LR1) is necessary and sufficient for a bound of this type to hold.

For each $1\le i\le k-d$, let $S_i\leqslant\Z^d$ denote the kernel of the map $\mc{L}_i$, and let $r_i$ be the rank of $S_i$. Furthermore, for each subset $I\subseteq \{1,\ldots ,k-d\}$ let
\[S_I=\bigcap_{i\in I}S_i,\]
and let $r_I$ be the rank of $S_I$. For convenience, set $S_\emptyset=\Z^d$ and $r_\emptyset=d$. For any pair $I,J\subseteq\{1,\ldots ,k-d\}$, the sum set $S_I+S_J$ is a subgroup of $\Z^d$, and it therefore has rank at most $d$. On the other hand we have that
\[\mathrm{rk}(S_I+S_J)=\mathrm{rk}(S_I)+\mathrm{rk}(S_J)-\mathrm{rk}(S_I\cap S_J),\]
which gives the inequality
\begin{equation}\label{eqn.RankIneq1}
  r_I+r_J\le d+r_{I\cup J}.
\end{equation}
As one application of this inequality we see immediately that
\begin{align}
  r_1+r_2+\cdots +r_{k-d}&\le d+r_{12}+r_3+\cdots +r_{k-d}\nonumber\\
  &\le 2d+r_{123}+r_4+\cdots +r_{k-d}\nonumber\\
  &\phantom{..}\vdots\nonumber\\
  &\le d(k-d-1)+r_{12\ldots (k-d)}\nonumber\\
&= d(k-d-1).\label{eqn.RankIneq2}
\end{align}
The last equality here uses the assumption that $Y$ is aperiodic.

From Lemma \ref{lem.ConnComp}, we know that $c(r)$ is equal to the number of connected components of $\mathrm{reg}(r)$. Let the map $\mc{C}:\Z^{d(k-d)}\rar\mc{W}$ be defined by
\[\mc{C}(n^{(1)},\ldots ,n^{(k-d)})=(\{L_1(n^{(1)})\},\ldots ,\{L_{k-d}(n^{(k-d)})\}),\]
for $n^{(1)},\ldots ,n^{(k-d)}\in\Z^d.$ Identify $\Z^d$ with the set $\mc{Z}=\Z^k\cap \langle e_1,\ldots ,e_d\rangle_\R$, and for each $r>0$ let $\mc{Z}_r\subseteq \Z^d$ be defined by
\[\mc{Z}_r=-\rho^{-1}(r\Omega)\cap\mc{Z}.\]
Since our window $\mc{W}$ is a fundamental domain for the integer lattice in $F_\rho$, there is a one to one correspondence between points of $Y$  and elements of $\mc{Z}$. This correspondence is given explicitly by mapping a point $y\in Y$ to the vector in $\mc{Z}$ given by the first $d$ coordinates of $\tilde{y}$. Also, notice that if $n\in\Z^k$ and $-n.0\in\mc{W}$, then it follows that
\[-n.0=(\{L_1(n_1,\ldots ,n_d)\},\ldots ,\{L_{k-d}(n_1,\ldots ,n_d)\}).\]
These observations together imply that the collection of all vertices of connected components of $\mathrm{reg}(r)$ is precisely the set $\mc{C}(\mc{Z}_r^{k-d}),$ which in turn implies that
\[c(r)\asymp |\mc{C}(\mc{Z}_r^{k-d})|.\]

The values of the function $\mc{C}$ define a natural $\Z^{d(k-d)}$ action on $\mc{W}$. Therefore we may regard the set $\mc{C}(\Z^{d(k-d)})$ as a group, isomorphic to
\[\Z^{d(k-d)}/\mathrm{ker}(\mc{C})\cong \Z^d/S_1\oplus\cdots \oplus \Z^d/S_{k-d}.\]
If (LR1) holds then we have that
\[\mathrm{rk}(\mc{C}(\Z^{d(k-d)}))=~d(k-d)-\sum_{i=1}^{k-d}r_i=~d,\]
and from this it follows that
\[|\mc{C}(\mc{Z}_r^{k-d})|\asymp r^d.\]
On the other hand, if (LR1) does not hold then by \eqref{eqn.RankIneq2} we have that
\[\mathrm{rk}(\mc{C}(\Z^{d(k-d)}))>d,\]
which implies that
\[|\mc{C}(\mc{Z}_r^{k-d})|\gg r^{d+1}.\]
We conclude that $c(r)\ll r^d$ if and only if condition (LR1) holds, so (LR1) is a necessary condition for linear repetitivity.

Next we assume that (LR1) holds and we prove that, under this assumption, condition (LR2) is necessary and sufficient in order for $Y$ to be LR. First of all, suppose that $I$ and $J$ were disjoint, nonempty subsets of $\{1,\ldots ,k-d\}$ for which
\[r_I+r_J<d+r_{I\cup J}.\]
Then, by the same argument used in \eqref{eqn.RankIneq2}, we would have that
\begin{align*}
\sum_{i=1}^{k-d}r_i~\le ~d(k-d-3)+r_{(I\cup J)^c}+r_I+r_J~<~d(k-d-1).
\end{align*}
This clearly contradicts (LR1). Therefore if (LR1) holds then, by \eqref{eqn.RankIneq1}, we have that
\[r_I+r_J=d+r_{I\cup J},\]
whenever $I$ and $J$ are disjoint and nonempty.

For each $1\le i\le k-d$, define $J_i=\{1,\ldots ,k-d\}\setminus \{i\},$ and let $\Lambda_i=S_{J_i}$. Write $m_i=r_{J_i}$ for the rank of $\Lambda_i$. Then, by what was established in the previous paragraph, we have that
\[m_i+r_i=d.\]
If $n$ is any nonzero vector in $\Lambda_i$, then $n$ is in $S_j$ for all $j\not= i$. Since $Y$ is aperiodic, this means that $n\not\in S_i$, which gives that
\[\mathrm{rk}(\Lambda_i+S_i)=m_i+r_i-\mathrm{rk}(\Lambda_i\cap S_i)=d.\]
Furthermore, for any $j\not= i$, the fact that $\Lambda_j\subseteq S_i$ implies that $\Lambda_j\cap \Lambda_i=\{0\}$, so
\[\mathrm{rk}(\Lambda_1+\cdots +\Lambda_{k-d})=\sum_{i=1}^{k-d}\mathrm{rk}(\Lambda_i)=\sum_{i=1}^{k-d}(d-r_i)=d.\]
For each $i$, let $F_i\subseteq\Z^d$ be a complete set of coset representatives for $\Z^d/(\Lambda_i+S_i)$. Also, write $\Lambda=\Lambda_1+\cdots +\Lambda_{k-d}$, and let $F\subseteq\Z^d$ be a complete set of representatives for $\Z^d/\Lambda$. What we have shown so far implies that all of the sets $F_1,\ldots ,F_{k-d},$ and $F$ are finite.

Again thinking of $\Z^d$ as being identified with the set $\mc{Z}$, let
\begin{align*}
\mc{Z}_{r,\Lambda}=\mc{Z}_r\cap\Lambda,\quad \mc{Z}_{r,\Lambda_i}=\mc{Z}_r\cap\Lambda_i,\quad\text{and}\quad\mc{Z}_{r,S_i}=\mc{Z}_r\cap S_i.
\end{align*}
For each $i$, choose a basis $\{v^{(i)}_j\}_{j=1}^{m_i}$ for $\Lambda_i$, and define
\[\Omega_i'=\left\{\sum_{j=1}^{m_i}t_iv^{(i)}_j:-1/2\le t_i< 1/2\right\},\]
and
\[\Omega'=\Omega_1'+\cdots+\Omega_{k-d}',\]
so that $\Omega'$ is a fundamental domain for $\R^d/\Lambda$. We now specify $\Omega$ to be the subset of points in $E$ whose first $d$ coordinates lie in $\Omega'$. In other words,
\[\Omega=E\cap\rho^{-1}(\Omega').\]
Notice that every $n\in\Lambda$ has a unique representation of the form
\[n=\sum_{i=1}^{k-d}\sum_{j=1}^{m_i}a_{ij}v^{(i)}_j,\quad a_{ij}\in\Z.\]
Using this representation, we have that
\begin{align*}
  \mc{L}(n)=\mc{C}((n^{(i)})_{i=1}^{m_i}),
\end{align*}
where, for each $i$, the vector $n^{(i)}\in\Z^d$ is given by
\[n^{(i)}=\sum_{j=1}^{m_i}a_{ij}v^{(i)}_j.\]
This gives a one to one correspondence between elements of $\mc{L}(\Lambda)$ and elements of the set
\[\mc{C}(\Lambda_1\times\cdots \times\Lambda_{k-d})=\mc{L}_1(\Lambda_1)\times\cdots\times\mc{L}_{k-d}(\Lambda_{k-d}).\]
We will combine this observation with the facts that
\begin{equation*}
  \mc{L}(\Z^d)=\mc{L}(\Lambda+F)
\end{equation*}
and
\begin{equation*}
  \mc{C}(\Z^{d(k-d)})=\mc{C}((\Lambda_1+F_1)\times\cdots \times(\Lambda_{k-d}+F_{k-d})),
\end{equation*}
in order to study the spacings between points of the sets $\mc{L}(\mc{Z}_r)$ and $\mc{C}(\mc{Z}_r^{k-d})$.

First of all, it is clear that
\begin{align}\label{eqn.LSpacing1}
\mc{L}(\mc{Z}_{r})~\supseteq~ \mc{L}_1(\mc{Z}_{r,\Lambda_1})\times\cdots\times\mc{L}_{k-d}(\mc{Z}_{r,\Lambda_{k-d}}),
\end{align}
and that
\begin{align}
\mc{C}(\mc{Z}_r^{k-d})~\supseteq~\mc{L}_1(\mc{Z}_{r,\Lambda_1})\times\cdots\times\mc{L}_{k-d}(\mc{Z}_{r,\Lambda_{k-d}}).\label{eqn.CSpacing2}
\end{align}

Since all of the sets $F_1,\ldots ,F_{k-d}$, and $F$ are finite, there is a constant $\kappa>0$ with the property that, for all sufficiently large $r$,
\begin{align*}
\mc{Z}_r~&\subseteq~\mc{Z}_{r+\kappa,\Lambda}+F,\quad\text{and}\\
\mc{Z}_r~&\subseteq~\mc{Z}_{r+\kappa,\Lambda_i}+\mc{Z}_{r+\kappa,S_i}+F_i,
\end{align*}
for each $1\le i\le k-d$. For the second inclusion here we are using the definition of $\Omega$ and the fact that $\Lambda_j\subseteq S_i$ for all $j\not= i$. These inclusions imply that
\begin{align}
 \mc{L}(\mc{Z}_{r})~&\subseteq~\mc{L}(\mc{Z}_{r+\kappa,\Lambda})+\mc{L}(F)\nonumber\\
 &\subseteq~\mc{L}_1(\mc{Z}_{r+\kappa,\Lambda_1}+F)\times\cdots\times\mc{L}_{k-d}(\mc{Z}_{r+\kappa,\Lambda_{k-d}}+F),\label{eqn.LSpacing2}
\end{align}
and that
\begin{align}
\mc{C}(\mc{Z}_r^{k-d})~&\subseteq ~
\mc{C}\left((\mc{Z}_{r+\kappa,\Lambda_1}+F_1)\times\cdots\times(\mc{Z}_{r+\kappa,\Lambda_{k-d}}+F_{k-d})\right)\nonumber\\
&=~
\mc{L}_1(\mc{Z}_{r+\kappa,\Lambda_1}+F_1)\times\cdots\times\mc{L}_{k-d}(\mc{Z}_{r+\kappa,\Lambda_{k-d}}+F_{k-d}).\label{eqn.CSpacing1}
\end{align}
Now we are positioned to make our final arguments.

Suppose first of all that (LR2) holds. Let $U$ be any connected component of $\mathrm{reg}(r)$. Then $U$ is a $(k-d)$-dimensional box, with faces parallel to the coordinate hyperplanes, and with vertices in the set $\mc{C}(\mc{Z}_r^{k-d}).$  Therefore we can write $U$ in the form
\begin{equation}\label{eqn.ConnCompU}
U=\{x\in \mc{W}: \ell_i<x_i<r_i\},
\end{equation}
where for each $i$, the values of $\ell_i$ and $r_i$ are elements of the set $\mc{L}_i(\mc{Z}_r)$. By equation \eqref{eqn.CSpacing1}, together with Lemma \ref{lem.RelBad1}, there is a constant $c_1>0$ such that, for every $i$,
\[r_i-\ell_i\ge\frac{c_1}{r^{m_i}}.\]

Next we will show that there is a constant $c_2>0$ such that, for all sufficiently large $r$, the orbit of every point in $F_\rho/\Z^{k-d}$ under the action of $\mc{Z}_{c_2r}$ intersects every connected component of $\mathrm{reg}(r)$. Then Lemma \ref{lem.ConnComp} will imply that $Y$ is LR. To show that there is such a constant $c_2$, we use \eqref{eqn.LSpacing1} and Theorem \ref{thm.CasselsThm}. Each one of the linear forms $L_i$  is a badly approximable linear form in $m_i$ variables, when restricted to $\Lambda_i$. Therefore, by (T2) of Theorem \ref{thm.CasselsThm}, there is a constant $\eta>0$ with the property that, for all sufficiently large $r$ and for each $i$, the collection of points $\mc{L}_i(\mc{Z}_{c_2r,\Lambda_i})$ is $\eta/(c_2r^{m_i})$-dense in $\R/\Z$. Choosing $c_2>3c_1/\eta$ completes the proof of this part of the theorem, verifying that (LR1) and (LR2) together imply linear repetitivity.

For the final part, suppose that (LR1) holds and (LR2) does not. Then one of the linear forms $L_i$ is not relatively badly approximable, and we assume without loss of generality that it is $L_1$. Let $c_2$ be any positive constant, and consider the collection of points $\mc{L}(\mc{Z}_{c_2r})$. By \eqref{eqn.LSpacing2}, the first coordinates of these points are a subset of \[\mc{L}_1(\mc{Z}_{c_2r+\kappa,\Lambda_1}+F).\]
There are at most $c_2\delta r^{m_1}-1$ points in the latter set, for some constant $\delta$ depending on $\Lambda_1$. Therefore, thinking of the points as being arranged in increasing order in $[0,1)$, there must be two consecutive points which are at least $1/(c_2\delta r^{m_1})$ apart. On the other hand, by \eqref{eqn.CSpacing2} and our hypothesis on $L_1$, we can choose $r$ large enough so that there is a connected component $U$ of $\mathrm{reg}(r)$, given as in \eqref{eqn.ConnCompU}, with
\[r_1-\ell_1<\frac{1}{c_2\delta r^{m_1}}.\]
From these two observations it is clear that there is some point in $F_\rho/\Z^{k-d}$ whose orbit under $\mc{Z}_{c_2r}$ does not intersect $U$. Since $c_2>0$ was arbitrary, this means that $Y$ is not LR. Therefore, (LR1) and (LR2) are necessary conditions for linear repetitivity, and the proof of Theorem \ref{thm.LRClassification} is complete.

\section{Canonical cut and project sets}\label{sec.Canonical}
In this section we turn our attention to canonical cut and project sets. In order to gain a broader perspective on our results, we will use the notion of local derivability for point sets, first introduced in \cite{BaakSchlJarv1991}. Suppose that $Y_1$ and $Y_2$ are two cut and project sets formed from a common physical space $E$, and suppose (without loss of generality for the purposes of all of our results) that $Y_1$ and $Y_2$ are both uniformly discrete and relatively dense. We say that {\bf $\mathbf{Y_1}$ is locally derivable from $\mathbf{Y_2}$} if there exists a constant $c>0$ with the property that, for all $x\in E$ and for all sufficiently large $r$, the equivalence class of the patch of size $r$ centered at $x$ in $Y_2$ uniquely determines the patch of size $r-c$ centered at $x$ in $Y_1$.  There is a minor technical issue here, that $x$ may not belong to $Y_1$ or $Y_2$. However, since $Y_1$ and $Y_2$ are relatively dense, this can be rectified by requiring that $x$ be moved, when necessary in the definition above, to a nearby point of the relevant cut and project set. Finally, we say that $Y_1$ and $Y_2$ are {\bf mutually locally derivable (MLD)} if each set is locally derivable from the other.

The argument in \cite[Appendix]{BaakSchlJarv1991} (see also \cite{Baak2002} and \cite[Remark 7.6]{BaakGrim2013}) provides us with the following characterization of MLD cut and project sets $Y_1$ and $Y_2$ as above.
\begin{lemma}\label{lem.MLDCutAndProj}
  Let $Y_1$ and $Y_2$ be regular, totally irrational, aperiodic $k$ to $d$ cut and project sets, constructed with the same physical and internal spaces and with windows $\mc{W}_1$ and $\mc{W}_2$, respectively. Then $Y_1$ is locally derivable from $Y_2$ if and only if $\mc{W}_1$ is a finite union of sets each of which is a finite intersection of translates of $\mc{W}_2$ (or of its complement), with translations taken from $\rho^*(\Z^k)$.
\end{lemma}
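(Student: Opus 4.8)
The plan is to prove Lemma~\ref{lem.MLDCutAndProj} by reducing the question of local derivability to a purely combinatorial statement about the windows, via the description of patches given in Lemma~\ref{lem.ConnComp} and its underlying geometry. First I would recall the key mechanism: a patch of size $r$ centered at a point $y\in Y_2$ is determined, up to equivalence, by the connected component of $\mathrm{reg}_2(r)$ containing $\rho^*(\tilde y)$, and this component in turn is cut out by the translates $n.\partial\mc{W}_2$ for $n\in-\rho^{-1}(r\Omega)\cap\Z^k$. Concretely, knowing $\mc{P}(y,r)$ in $Y_2$ is equivalent to knowing, for each such $n$, on which side of the hyperplane arrangement defining $\partial(n.\mc{W}_2)$ the point $\rho^*(\tilde y)$ lies — i.e. for each $n$ in a ball of radius $\asymp r$, whether $\rho^*(\tilde y)+\rho^*(n)\in\mc{W}_2$ or not. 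Thus the data ``$\mc{P}(y,r)$ in $Y_2$'' is exactly the indicator pattern $\big(\mathbf{1}_{\mc{W}_2}(\rho^*(\tilde y)+\rho^*(n))\big)_{|n|\le cr}$ (plus the positions $y$ in $E$, which are common to both sets since $E$ is fixed).

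For the ``if'' direction, suppose $\mc{W}_1=\bigcup_{a}\bigcap_{b}(\epsilon_{ab}+\delta_{ab})$ where each $\delta_{ab}$ is a translate of $\mc{W}_2$ or of $F_\rho\setminus\mc{W}_2$ by an element of $\rho^*(\Z^k)$; let $T\subseteq\Z^k$ be the finite set of integer vectors realizing all these translations, and set $c=\max_{t\in T}\text{(a bound on }|\rho(t)|)$, inflated to absorb the constant discrepancy between patch definitions noted after Lemma~\ref{lem.ConnComp}. Given the equivalence class of the size-$r$ patch at $x$ in $Y_2$, we know $\mathbf{1}_{\mc{W}_2}$ at all points $\rho^*(\tilde x)+\rho^*(n)$ with $|n|\lesssim r$; since membership of a point $p$ in $\mc{W}_1$ is a Boolean combination of the values $\mathbf{1}_{\mc{W}_2}(p+\rho^*(t))$, $t\in T$, we can therefore recover $\mathbf{1}_{\mc{W}_1}$ at all points $\rho^*(\tilde x)+\rho^*(n)$ with $|n|\le cr$ shrunk by $|T|$'s diameter, hence recover which component of $\mathrm{reg}_1(r-c')$ contains $\rho^*(\tilde x)$, hence the size-$(r-c')$ patch at $x$ in $Y_1$ up to equivalence — which is precisely local derivability.

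For the ``only if'' direction I would argue contrapositively at the level of sets. Local derivability of $Y_1$ from $Y_2$ says that the partition of $F_\rho$ into components of $\mathrm{reg}_2(r)$ refines (up to a boundary error) the partition into components of $\mathrm{reg}_1(r-c)$; letting $r\to\infty$ and using that $\rho^*(\Z^k)$ is dense in $F_\rho$, one sees that the $\sigma$-algebra (or really the Boolean algebra of ``cylinder-type'' sets) generated by all $\rho^*(\Z^k)$-translates of $\mc{W}_2$ and its complement must contain $\mc{W}_1$. The finiteness — that $\mc{W}_1$ is a \emph{finite} union of \emph{finite} intersections from a \emph{finite} collection of translates — comes from the fact that the constant $c$ in the definition of local derivability is uniform in $r$: only translates by $t\in\Z^k$ with $|\rho(t)|\le c$ can ever be used, and there are finitely many relevant ones once we note that $\rho^*$ restricted to this bounded set of lattice points has finite image modulo the period lattice, and that $\partial\mc{W}_2$ being a finite union of faces means only finitely many translated faces can pass through any bounded region. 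Here I would invoke the cited argument of \cite[Appendix]{BaakSchlJarv1991} for the precise bookkeeping.

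The main obstacle I expect is the ``only if'' direction, specifically making rigorous the passage from ``every sufficiently large patch in $Y_2$ determines the corresponding patch in $Y_1$'' to the clean algebraic statement about $\mc{W}_1$, while simultaneously controlling finiteness. The subtlety is that the windows have boundaries, so the determination only holds on the regular parts, and one must argue that the finitely many translated boundary hyperplanes of $\mc{W}_2$ that matter suffice to cut out $\partial\mc{W}_1$ exactly (not just up to a null set or up to closure); total irrationality and aperiodicity are what prevent pathological coincidences. Rather than redo this delicate argument, the proof will cite \cite[Appendix]{BaakSchlJarv1991}, \cite{Baak2002}, and \cite[Remark 7.6]{BaakGrim2013}, and restrict itself to indicating how their general MLD criterion specializes to the strip/window picture used here.
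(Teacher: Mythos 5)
Your proposal matches the paper's treatment: the paper gives no proof of this lemma at all, simply attributing it to the argument in \cite[Appendix]{BaakSchlJarv1991} (see also \cite{Baak2002} and \cite[Remark 7.6]{BaakGrim2013}), which is exactly where you also land for the delicate ``only if'' direction. Your sketch of the ``if'' direction --- translating a Boolean combination of $\rho^*(\Z^k)$-translates of $\mc{W}_2$ into a locally determined rule for membership in $Y_1$ --- is correct and in fact supplies more detail than the paper does; just be aware that your finiteness heuristic for the converse (``only finitely many translated faces pass through a bounded region'') is not literally true, since $\rho^*(\Z^k)$ is dense, which is precisely why that direction is deferred to the cited bookkeeping.
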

From this lemma we immediately deduce the following result relating cubical and canonical cut and project sets.
\begin{corollary}\label{cor.MLDCubCanon}
Let $Y_1$ be a cubical cut and project set,
and let $Y_2$ be the cut and project set formed from the same data as $Y_1$, but with the canonical window. Then $Y_1$ is locally derivable from $Y_2$. Furthermore, $Y_2$ is locally derivable from $Y_1$ if and only if, for each $1\le i\le d$, the point $\rho^*(e_i)$ lies on a line of the form $\R e_j$, for some $d+1\le j\le k$.
\end{corollary}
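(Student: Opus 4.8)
Everything will be reduced to the window criterion of Lemma \ref{lem.MLDCutAndProj}, after which the problem becomes one about the geometry of the two windows $\mc{W}_1=[0,1)^{k-d}$ and $\mc{W}_2=\rho^*([0,1)^k)$ inside $F_\rho\cong\R^{k-d}$. The first step is to record the shape of $\mc{W}_2$. Writing $z_i:=\rho^*(e_i)=-L(e_i)\in\rho^*(\Z^k)$ for $1\le i\le d$, one has
\[\mc{W}_2=\mc{W}_1+Z,\qquad Z:=\Big\{\textstyle\sum_{i=1}^{d}t_iz_i:0\le t_i<1\Big\},\]
so $\mc{W}_2$ is the half-open zonotope generated by the unit vectors spanning $F_\rho$ (these produce $\mc{W}_1$) together with $z_1,\dots,z_d$; in particular $\mc{W}_1\subseteq\mc{W}_2$. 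In the standard coordinates $x_1,\dots,x_{k-d}$ on $F_\rho$ the image of $\mc{W}_2$ under projection to the $x_\ell$-axis is the interval $(-a_\ell,\,1+b_\ell)$, where $a_\ell=\sum_{j:\alpha_{\ell j}>0}\alpha_{\ell j}$ and $b_\ell=\sum_{j:\alpha_{\ell j}<0}|\alpha_{\ell j}|$. Two facts will be used repeatedly: the offsets $1+a_\ell$ and $1+b_\ell$ lie in the subgroup $(\rho^*(\Z^k))_\ell=\Z+\sum_{j}\Z\alpha_{\ell j}$ of $\R$ (the projection of $\rho^*(\Z^k)$ to the $x_\ell$-axis), and $\rho^*(\Z^k)$ is dense in $F_\rho$ because $E$ is totally irrational.

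For the first assertion I would show that, up to a set of measure zero (harmless, since the windows may be altered on their boundaries without changing the regular cut and project sets), one can write $\mc{W}_1=\mc{W}_2\cap\bigcap_{m=1}^{M}(\mc{W}_2+v_m)^c$ with $v_1,\dots,v_M\in\rho^*(\Z^k)$; this has exactly the form permitted by Lemma \ref{lem.MLDCutAndProj}. Each $v_m$ must have the property that $\mc{W}_2+v_m$ has interior disjoint from $\mc{W}_1$, for which it suffices that $(v_m)_\ell\le -1-b_\ell$ or $(v_m)_\ell\ge 1+a_\ell$ for some $\ell$ (then the $x_\ell$-extent of $\mc{W}_2+v_m$ avoids $(0,1)$), and the chosen translates must together cover the bounded set $\mc{W}_2\setminus\mc{W}_1$, which is the union over $\ell$ of the two ``overhang'' slabs $\{p\in\mc{W}_2:p_\ell<0\}$ and $\{p\in\mc{W}_2:p_\ell\ge 1\}$. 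Using that the relevant offset lies in $(\rho^*(\Z^k))_\ell$ and that $\rho^*(\Z^k)$ is dense, for each point $q$ of such a slab one produces a permissible $v\in\rho^*(\Z^k)$ with $q\in\operatorname{int}(\mc{W}_2+v)$, and a compactness argument then extracts the finite list $v_1,\dots,v_M$. I expect the genuinely delicate point to be the covering of the thin slab immediately adjacent to a face of $\mc{W}_1$: there a permissible translate must have its bounding facet lying exactly on $\{x_\ell=0\}$ (respectively $\{x_\ell=1\}$), and one must verify that finitely many such translates, possibly together with translates of the slanted facets of $\mc{W}_2$, still suffice; making the compactness step go through at the boundary of $\mc{W}_1$ is where the care is needed.

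For the second assertion, the hypothesis is that each $z_i=\rho^*(e_i)$ is a scalar multiple of some standard basis vector $e_{d+\ell(i)}$. If this holds for every $i$, then $Z$, and hence $\mc{W}_2=\mc{W}_1+Z=\prod_{\ell}(-a_\ell,\,1+b_\ell)$, is an axis-parallel box, and $a_\ell,b_\ell\in\Gamma_\ell$ where $\rho^*(\Z^k)=\prod_\ell\Gamma_\ell$ with $\Gamma_\ell=\Z+\sum_{i:\ell(i)=\ell}\Z(z_i)_\ell$. Each one-dimensional factor is a finite Boolean combination of $\Gamma_\ell$-translates of $[0,1)$: with $s=-a_\ell$, $t=1+b_\ell$ and $m=\lfloor t-s\rfloor\ge 1$ one has $[s,t)=\bigcup_{j=0}^{m-1}\big([0,1)+(s+j)\big)\cup\big(([0,1)+(s+m))\cap([0,1)+(t-1))\big)$. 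Distributing products over these unions and intersections rewrites $\mc{W}_2$ as a finite union of finite intersections of translates $\mc{W}_1+v$ with $v\in\prod_\ell\Gamma_\ell=\rho^*(\Z^k)$, so Lemma \ref{lem.MLDCutAndProj} gives that $Y_2$ is locally derivable from $Y_1$. Conversely, if some $z_{i_0}$ has (at least) two nonzero coordinates $\ell_1\neq\ell_2$, then the $k-d-1$ generators $z_{i_0}$ and $\{e_{d+\ell}:\ell\notin\{\ell_1,\ell_2\}\}$ are independent and span a hyperplane $W$ which is not parallel to any coordinate hyperplane, so the zonotope $\mc{W}_2$ has a facet lying in a translate of $W$; but any finite Boolean combination of $\rho^*(\Z^k)$-translates of the cube $\mc{W}_1$ has boundary contained in finitely many axis-parallel hyperplanes, and such a set cannot contain a facet (which has positive $(k-d-1)$-dimensional measure) lying in a non-axis-parallel hyperplane. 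Hence $\mc{W}_2$ is not of the form in Lemma \ref{lem.MLDCutAndProj}, so $Y_2$ is not locally derivable from $Y_1$, which completes the proof.
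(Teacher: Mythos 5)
Your overall strategy is the right one: the paper offers no written proof of this corollary, asserting only that it follows from Lemma \ref{lem.MLDCutAndProj}, and your reduction of both assertions to Boolean-combination statements about the windows $\mc{W}_1=[0,1)^{k-d}$ and $\mc{W}_2=\mc{W}_1+Z$ is exactly the intended route. Your treatment of the second assertion is complete and correct in both directions: the product decomposition $\rho^*(\Z^k)=\prod_\ell\Gamma_\ell$ under the hypothesis, the interval identity $[s,t)=\bigcup_j([0,1)+s+j)\cup(([0,1)+s+m)\cap([0,1)+t-1))$ with $s=-a_\ell\in\Gamma_\ell$, $t-1=b_\ell\in\Gamma_\ell$, and for the converse the observation that the zonotope $\mc{W}_2$ acquires a facet of positive $(k-d-1)$-measure in a non--axis-parallel hyperplane (spanned by $z_{i_0}$ and the $e_{d+\ell}$, $\ell\notin\{\ell_1,\ell_2\}$), whereas any finite Boolean combination of translates of the cube has boundary inside finitely many axis-parallel hyperplanes. (Since windows may be adjusted on null sets, you should run that last step at a facet point avoiding the finitely many axis-parallel hyperplanes, where a small ball meets both $\mathrm{int}\,\mc{W}_2$ and $\mathrm{int}\,\mc{W}_2^c$ in positive measure; this is a one-line addendum.)

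For the first assertion you take a more laborious route than necessary, and the one step you flag as delicate is indeed the only real work in your version: covering the slab $\{p\in\mc{W}_2:-\epsilon<p_\ell<0\}$ forces $v_\ell=-(1+b_\ell)$ exactly, which pins the transverse components of $v$ to a coset of $\Z^{k-d-1}$ (plus the image of $\ker\mc{L}_\ell$); one then checks that the extremal face of $\mc{W}_2$ in the $x_\ell$-direction contains a transverse half-open unit cube generated by the $e_{d+\ell'}$, $\ell'\ne\ell$, so that these constrained translates still tile transversally. This can be completed, but you can avoid complements and the covering argument entirely: with $I_\ell^{+}=\{i:\alpha_{\ell i}>0\}$ and $I_\ell^{-}=\{i:\alpha_{\ell i}<0\}$, the translate $\mc{W}_2+\rho^*(-\sum_{i\in I}e_i)$ equals $\mc{W}_1+\{\sum_i s_i z_i\}$ with $s_i\in[-1,0)$ for $i\in I$ and $s_i\in[0,1)$ otherwise; every summand $-s_i\alpha_{\ell i}$ is $\ge 0$ when $I=I_\ell^{+}$ and $\le 0$ when $I=I_\ell^{-}$, so these two translates lie in $\{x_\ell\ge 0\}$ and $\{x_\ell<1\}$ respectively while each contains $\mathrm{int}\,\mc{W}_1$. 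Hence, up to boundary, $\mc{W}_1=\bigcap_{\ell}(\mc{W}_2+\rho^*(-\sum_{i\in I_\ell^{+}}e_i))\cap(\mc{W}_2+\rho^*(-\sum_{i\in I_\ell^{-}}e_i))$, a finite intersection of $\rho^*(\Z^k)$-translates of $\mc{W}_2$, and Lemma \ref{lem.MLDCutAndProj} applies immediately.
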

If a Delone set $Y_1$ is locally derivable from $Y_2$, and if $Y_2$ is LR, then it follows directly from the definitions that $Y_1$ is also LR. Therefore, Corollary \ref{cor.MLDCubCanon} implies the first part of Theorem \ref{thm.Cub<->Can}, that if a cubical cut and project set is not LR, then neither is the corresponding canonical cut and project set.

The second part of Theorem \ref{thm.Cub<->Can} is not quite as obvious. To understand the issue, note that it is easy, using Theorem \ref{thm.LRClassification} and Corollary \ref{cor.MLDCubCanon}, to come up with cut and project sets $Y_1$ and $Y_2$, as in the statement of the corollary, for which $Y_1$ is LR but $Y_2$ is not locally derivable from $Y_1$. For example, the subspace used to define the Ammann-Beenker tiling in Section \ref{sec.AmmBee} provides us with such sets. On the other hand, as can be seen in the Ammann-Beenker example, in general this does not imply that $Y_2$ is not LR.

We will demonstrate two different constructions for producing cubical cut and project sets which are LR, but for which their canonical counterparts are not. Our first construction is based on elementary geometric considerations.
\begin{lemma}\label{lem.Cub<->Can1}
Suppose that $\alpha_1,\alpha_2,$ and $\beta$ are positive real numbers with $(\alpha_1,\alpha_2)\in\mathcal{B}_{2,1}$ and $\beta\in\mathcal{B}_{1,1}$, and let $E$ be the three dimensional subspace of $\R^5$ defined by
\[E=\{(x,-\alpha_1x_1-\alpha_2x_2-x_3,-\beta x_3):x\in\R^3\}.\]
Then every cubical cut and project set defined using $E$ is LR, but no canonical cut and project sets defined using $E$ are LR.
\end{lemma}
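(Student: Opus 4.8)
The plan is to apply Theorem~\ref{thm.LRClassification} twice: once to show the cubical cut and project set is LR, and once (together with the MLD machinery of Section~\ref{sec.Canonical}) to show the canonical one is not. Write $L_1(x) = -\alpha_1 x_1 - \alpha_2 x_2 - x_3$ and $L_2(x) = -\beta x_3$, so that $E = \{(x,L_1(x),L_2(x)) : x \in \R^3\}$ with $k=5$, $d=3$, $k-d=2$. First I would identify the kernels: the map $\mc{L}_2(n) = -\beta n_3 \mo 1$ has kernel $S_2 = \langle e_1, e_2\rangle_\Z$, which has rank $r_2 = 2$ (here we use $\beta$ irrational, which follows from $\beta\in\mc{B}_{1,1}$). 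The map $\mc{L}_1(n) = -\alpha_1 n_1 - \alpha_2 n_2 - n_3 \mo 1$: since the coefficient of $n_3$ is the integer $-1$, we have $\mc{L}_1(n)\equiv 0$ exactly when $-\alpha_1 n_1 - \alpha_2 n_2 \in \Z$, and since $(\alpha_1,\alpha_2)\in\mc{B}_{2,1}$ (so in particular $1,\alpha_1,\alpha_2$ are $\Q$-linearly independent) this forces $n_1 = n_2 = 0$; hence $S_1 = \langle e_3\rangle_\Z$ has rank $r_1 = 1$. Then $r_1 + r_2 = 3 = d(k-d-1) = 3\cdot 1$, so condition (LR1) holds. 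For (LR2), $L_2$ restricted to a complement of its kernel (e.g.\ $\langle e_3\rangle$) is $n_3 \mapsto -\beta n_3$, which is badly approximable since $\beta\in\mc{B}_{1,1}$, so $L_2$ is relatively badly approximable; and $L_1$ restricted to the complement $\Lambda_1 = S_2 = \langle e_1, e_2\rangle$ of its kernel is $(n_1,n_2)\mapsto -\alpha_1 n_1 - \alpha_2 n_2$, which is badly approximable since $(\alpha_1,\alpha_2)\in\mc{B}_{2,1}$, so $L_1$ is relatively badly approximable (using Lemma~\ref{lem.RelBad2} to see the choice of complement is immaterial). Also $E$ is totally irrational and $Y$ is aperiodic since $S_1\cap S_2 = \{0\}$. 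Thus Theorem~\ref{thm.LRClassification} gives that every cubical cut and project set defined using $E$ is LR.

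For the canonical side, I would argue by contradiction: suppose some canonical cut and project set $Y_2$ defined using $E$ were LR. The strategy is to show that $Y_2$ determines, via a \emph{slicing} argument, a codimension-one cut and project set that is not LR, contradicting Corollary~\ref{cor.LRCodim1}. Concretely, the canonical window $\mc{W}_2 = \rho^*([0,1]^5)$ is a polytope in $F_\rho \cong \R^2$; one analyzes the $\Z^5$-action on $F_\rho$ by $n.w = \rho^*(n) + w$. The key geometric feature of this particular $E$ is that $\rho^*(e_4) = (L_2(0),\ldots)$-type vector lies on the axis $\R e_5$ while the other relevant translation vectors $\rho^*(e_1),\rho^*(e_2),\rho^*(e_3)$ do not — this is exactly the failure of the condition in the second part of Corollary~\ref{cor.MLDCubCanon}, so $Y_2$ is \emph{not} MLD with the cubical set, and the canonical window is genuinely a (non-lattice-aligned) octagon/hexagon rather than a rectangle. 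I would then project $\mc{W}_2$ along the $e_5$-direction (the direction controlled solely by $\beta$) onto the complementary line: the pattern complexity of $Y_2$ in that slice direction is governed by a one-dimensional cut and project set with window an interval whose endpoints are images of the non-axis-aligned facets of $\mc{W}_2$, and whose underlying linear form involves the product-type quantity coming from composing the $\beta$-direction with the $\alpha$-directions.

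The main obstacle — and the place where the paper's remark about ``any number being a product of two badly approximable numbers'' is a red herring for \emph{this} lemma — is that here the obstruction is purely geometric, not Diophantine. I expect the cleanest route is: because the facets of the canonical octagonal window are not parallel to the coordinate directions of $F_\rho$, the singular set $\mathrm{sing}(r)$ for $Y_2$ has connected components whose diameters shrink like $r^{-2}$ in one direction but the relevant return-time set only becomes $c/r$-dense (not $c/r^{2}$-dense) in that direction, since along the offending diagonal facet the available integer translations $\rho^*(\Z^5)$ only achieve linear-in-$N$ density of gaps of size $1/N$ rather than $1/N^2$. Quantifying this mismatch — showing that for every constant $C$ there is an arbitrarily large $r$ and a connected component of $\mathrm{reg}(r)$ that no orbit of $\mc{Z}_{Cr}$ reaches — is the crux, and it mirrors the final paragraph of the proof of Theorem~\ref{thm.LRClassification} but now driven by the tilt of the window rather than by a badly-approximable failure. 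I would carry it out by choosing coordinates on $F_\rho$ adapted to the octagonal window, writing out $\mathrm{reg}(r)$ as a union of small boxes as in \eqref{eqn.ConnCompU}, and comparing the minimal box dimension (of order $r^{-2}$, forced by Lemma~\ref{lem.RelBad1}-type lower bounds on spacings) against the maximal gap in the relevant orbit (of order $r^{-1}$, forced by the geometry), concluding via Lemma~\ref{lem.ConnComp} that $Y_2$ fails LR.
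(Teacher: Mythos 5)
Your first half (the cubical case) is correct and is exactly the paper's argument: $r_1=1$, $r_2=2$ gives (LR1), and the hypotheses $(\alpha_1,\alpha_2)\in\mc{B}_{2,1}$, $\beta\in\mc{B}_{1,1}$ give (LR2) on the complements $\Lambda_1=\langle e_1,e_2\rangle$, $\Lambda_2=\langle e_3\rangle$. The canonical half, however, has genuine gaps. The most serious is your plan to ``conclude via Lemma~\ref{lem.ConnComp}'': that lemma is stated only for windows that are parallelotopes generated by integer vectors, and the canonical window here is a hexagon (not an octagon), so it does not apply. For non-parallelotope windows a connected component of $\mathrm{reg}(r)$ need not determine a unique patch class, so even after you produce a tiny component you must still prove that points projecting into it have patches distinct from those of points projecting outside it. The paper does this by hand, via a three-case analysis using specific integer translates ($n$, $n+e_5$, $e_4$) that separate the sliver from its complement; without some such argument the whole strategy is incomplete.

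The second gap is quantitative: you never actually construct the offending component, and the sizes you quote would not suffice. A ``generic'' component has dimensions $\asymp r^{-2}\times r^{-1}$, i.e.\ area $\asymp r^{-3}=r^{-d}$, which is exactly the LR threshold, so a mismatch of ``$r^{-2}$ width versus $c/r$-density'' proves nothing by itself. What is needed is a component of area $\ll r^{-4}$, and producing it requires the transference theorem (Theorem~\ref{thm.CasselsThm}) to place an integer translate of the slanted edge so that its endpoint $(y_1,y_2)$ satisfies \emph{both} $1-C/r^2<y_1<1$ and $|y_2-1/2|<C/r$ as in \eqref{eqn.yPlacement}; the slanted edge (slope $\beta$) then cuts off a triangle of base and height both $\ll r^{-2}$, hence area $\ll r^{-4}$ as in \eqref{eqn.Triangle1}, and one concludes via the frequency bound of Lemma~\ref{lem.FreqVol} (a patch class of frequency $\ll r^{-4}$ is incompatible with LR in dimension $3$). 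Note also that the mechanism is a slanted edge passing near a \emph{corner} formed by a vertical and a horizontal edge, not a gap along the diagonal facet as you describe, and that $\mathrm{reg}(r)$ is emphatically not a union of boxes of the form \eqref{eqn.ConnCompU} here. Finally, your opening gambit of slicing down to a codimension-one cut and project set and invoking Corollary~\ref{cor.LRCodim1} is abandoned mid-argument and would not work as stated; and while you correctly observe via Corollary~\ref{cor.MLDCubCanon} that the canonical set is not locally derivable from the cubical one, the paper explicitly warns (with the Ammann--Beenker example) that failure of local derivability does not by itself imply failure of LR.
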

\begin{proof}
Observe, as is implicit in the statement of the lemma, that $E$ is totally irrational, and that any cubical or canonical cut and project set defined using $E$ is aperiodic. In the notation of the proof of Theorem \ref{thm.LRClassification}, the kernels of the linear forms $\mc{L}_1$ and $\mc{L}_2$ have ranks $r_1=1$ and $r_2=2$. Therefore Theorem \ref{thm.LRClassification} allows us to conclude that any cubical cut and project set formed using $E$ is LR.

Let $\mc{W}$ be the canonical window in $F_\rho$. The window is a hexagon with vertices at
\begin{eqnarray*}
e_5,~e_4+(1+\beta)e_5,~(2+\alpha_1+\alpha_2)e_4+(1+\beta)e_5,\\
(2+\alpha_1+\alpha_2)e_4+\beta e_5,~(1+\alpha_1+\alpha_2)e_4,~\text{and}~0.
\end{eqnarray*}
For $r>0$, let us consider the orbit of the line segment from $e_5$ to $e_4+(1+\beta)e_5$, under the action of the collection of integers
\begin{equation}\label{eqn.IntCylinder}
-\rho^{-1}(r\Omega)\cap\Z^k
\end{equation}
used to define $\mathrm{sing}(r)$. By our Diophantine hypotheses on the subspace $E$ (using the transference principles in the same way as we have in the proof of Theorem \ref{thm.LRClassification}), there is a constant $C>0$ with the property that, for all sufficiently large $r$, there is an integer point $n$ in the set \eqref{eqn.IntCylinder} satisfying
\[n.e_5=(y_1,y_2),\]
with
\begin{equation}\label{eqn.yPlacement}
1-\frac{C}{r^2}<y_1<1~\text{ and }~\left|y_2-\frac{1}{2}\right|<\frac{C}{r}.
\end{equation}
Provided $r$ is large enough, the line segment from $n.e_5$ to $n.(e_4+(1+\beta)e_5)$, together with the lines $n.e_5+\R e_4$ and $e_4+\R e_5$, bound a triangle $\mc{T}_r$ with
\begin{equation}\label{eqn.Triangle1}
  |\mc{T}_r|\ll_\beta \frac{1}{r^4}.
\end{equation}

Now we claim that if $y$ and $y'$ are two points in the canonical cut and project set formed using $\mc{W}$, and if $\rho^*(\tilde{y})\in \mc{T}_r$ but $\rho^*(\tilde{y'})\notin \mc{T}_r$, then $P(y,r)$ and $P(y',r)$ are not in the same equivalence class of patches of size $r$. Note that we cannot appeal directly to Lemma \ref{lem.ConnComp} in this case, since the window we are using does not satisfy its hypotheses. In this case we argue directly as follows. Write $y^*=\rho^*(\tilde{y})$ and $y'^*=\rho^*(\tilde{y'})$, suppose that $y^*\in\mc{T}_r$ and $y'^*\not\in\mc{T}_r$, and write $\ell_n$ for the line segment from $n.e_5$ to $n.(e_4+(1+\beta)e_5)$. Consider the following three cases:\vspace*{.1in}

\noindent{\it Case 1:} If $y'^*$ lies in the half-plane above the line containing $\ell_n$ then the point $\tilde{y}-n$ lies in $\mc{S}$, while $\tilde{y'}-n$ does not. Therefore $y-\pi (n)\in P(y,r)$ but $y'-\pi (n)\notin P(y',r)$.\vspace*{.1in}

\noindent{\it Case 2:} If $y'^*$ lies in the half-plane below the line $n.e_5+\R e_4$ then the point $\tilde{y}+n+e_5$ lies in $\mc{S}$, while $\tilde{y'}-n-e_5$ does not. As in the previous case, $y-\pi (n+e_5)\in P(y,r)$ but $y'-\pi (n+e_5)\notin P(y',r)$.\vspace*{.1in}

\noindent{\it Case 3:} If neither Case 1 nor Case 2 applies, then $y'^*$ lies to the right of the line $e_4+\R e_5$, in the cone below the line containing $\ell_n$ and above the line $n.e_5+\R e_4$. It is clear that $\tilde{y}-e_4\notin\mc{S}$, and if $\tilde{y'}-e_4\in\mc{S}$ then the argument is the same as before. Otherwise, if $\tilde{y'}-e_4\notin\mc{S}$, then we would have to have that $y'^*\notin \mc{W}+e_4$. Since we have arranged in \eqref{eqn.yPlacement} for $y_2$ to be close to $1/2$, this would imply that $|y^*-y'^*|\gg 1$. As long as $r$ is sufficiently large, we could then conclude, by the same types of arguments used in Cases 1 and 2, that $P(y,r)$ contains a point which does not appear in $P(y',r)$.\vspace*{.1in}

Since these cases cover all possibilities, we have verified our claim above, that the connected component $\mc{T}_r$ corresponds to a unique equivalence class of patches of size $r$. This, together with \eqref{eqn.Triangle1} and a simple application of the Birkhoff Ergodic Theorem, shows that for all sufficiently large $r$, there are equivalence classes of patches of size $r$ which occur with frequency $\ll r^{-4}$. Since $d=3$, we conclude that canonical cut and project sets formed using $E$ cannot be LR.
\end{proof}
Although we will not attempt to generalize Lemma \ref{lem.Cub<->Can1}, we remark that a similar construction would likely work whenever $d>k-d>1$, to show that some canonical cut and project sets are not LR (and even with the extra requirement that their cubical counterparts are LR). We posit that the analogous conditions necessary to draw this conclusion from the argument just given should be, in the notation of the proof of Theorem \ref{thm.LRClassification}, that there are integers $1\le j<j'\le k-d$ satisfying
\begin{enumerate}
\item[(i)] $r_j\not= r_{j'}$, and\vspace*{.05in}
\item[(ii)] there exists an integer $1\le i\le d$ such that the orthogonal projection (with respect to the standard basis vectors) of the vector $\rho^*(e_i)\in F_\rho$, onto the plane in $F_\rho$ spanned by $e_j$ and $e_{j'}$, does not lie on either of the lines $\R e_j$ or $\R e_{j'}$.
\end{enumerate}
Condition (i) is simply the requirement that the kernels of two of the linear forms defining $E$, considered modulo $1$, have different ranks. The second condition is that there is a `slant' in the window, when projected orthogonally to the $e_je_{j'}$-plane.

Interestingly, there is a different type of behavior which can cause canonical cut and project sets to fail to be LR. This behavior is related to Diophantine approximation, and occurs because of the fact that two subspaces defined by relatively badly approximable linear forms can still intersect in a subspace which is not definable using relatively badly approximable forms. A one dimensional realization of this fact is the famous theorem of Marshall Hall \cite[Theorem 3.2]{Hall1947}, which implies that every non-zero real number can be written as a product of two badly approximable numbers. This provides the basis for the following example with $k=4$ and $d=2$, the smallest possible choices of $k$ and $d$ for which `cubical LR but canonical not' can occur.
\begin{lemma}\label{lem.Cub<->Can2}
Suppose that $\alpha$ and $\beta$ are positive badly approximable real numbers with the property that
\begin{equation}\label{eqn.WellAppHyp}
  \inf_{n\in\N}~n\cdot\left\{\frac{5\alpha\beta}{2}n\right\}=0.
\end{equation}
If $E$ is the two dimensional subspace of $\R^4$ defined by
\[E=\{(x,-(2/5)x_1-\alpha x_2,-\beta(x_1+(5/2)x_2)):x\in\R^2\},\]
then every cubical cut and project set defined using $E$ is LR, but no canonical cut and project sets defined using $E$ are LR.
\end{lemma}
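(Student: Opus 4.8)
The plan is to treat the cubical and canonical cases separately, using Theorem~\ref{thm.LRClassification} for the former and a direct frequency estimate (in the spirit of the proof of Lemma~\ref{lem.Cub<->Can1}) for the latter. First I would verify the structural hypotheses: the subspace $E$ is totally irrational and aperiodic (this is forced by $\alpha,\beta$ being irrational together with the specific rational entries $2/5$ and $5/2$), so that ``cubical'' and ``canonical'' cut and project sets defined by $E$ make sense. Here $k-d=2$, and the two linear forms are $L_1(x)=-(2/5)x_1-\alpha x_2$ and $L_2(x)=-\beta x_1-(5\beta/2)x_2$. I would compute the kernels $S_1,S_2$ of $\mc{L}_1,\mc{L}_2$ modulo $1$: since $\alpha$ is badly approximable (hence irrational), $\mc{L}_1(n_1,n_2)=0$ forces a rational linear relation that, after clearing the $2/5$, pins down $n$ up to a rank-one subgroup — so $r_1=1$, and similarly $r_2=1$. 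Then $r_1+r_2=2=d(k-d-1)$, verifying (LR1). For (LR2), each $L_i$ restricted to a rank-one complement to its kernel becomes (up to a nonzero rational multiple) a single badly approximable number, so $L_1$ and $L_2$ are relatively badly approximable; Lemma~\ref{lem.RelBad2} lets me not worry about the precise choice of complement. Hence Theorem~\ref{thm.LRClassification} gives that every cubical cut and project set defined using $E$ is LR.

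For the canonical case, the point is that although $E$ is built from two nice forms, the $x_1$-direction is governed by the product $\alpha\beta$ (up to the rational factors $2/5$ and $5/2$, whose product is $1$), and condition \eqref{eqn.WellAppHyp} says precisely that the relevant one-dimensional Diophantine quantity — essentially $\|(5\alpha\beta/2)n\|$ against $1/n$ — is \emph{not} bounded below. Concretely, I would identify a sub-window structure inside the canonical (hexagonal-type) window $\mc{W}=\rho^*([0,1]^4)$: the canonical window is a zonotope whose edge vectors are the images $\rho^*(e_j)$, and a slanted edge of this window, together with two coordinate lines through an integer translate of one of its vertices, will bound a small triangle $\mc{T}_r$ exactly as in the proof of Lemma~\ref{lem.Cub<->Can1}. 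The hypothesis \eqref{eqn.WellAppHyp} guarantees that for infinitely many $r$ there is an integer point $n\in-\rho^{-1}(r\Omega)\cap\Z^k$ whose action brings the relevant vertex to within $o(1/r)$ of a coordinate line in \emph{one} coordinate while the other coordinate of the displacement is controlled of order $1/r$; the resulting triangle then has area $\ll o(1/r)\cdot (1/r) = o(1/r^2)$, hence $o(r^{-2})=o(r^{-d})$ since $d=2$. A case analysis identical in structure to Cases~1--3 of Lemma~\ref{lem.Cub<->Can1} shows that $\mc{T}_r$ corresponds to a genuine distinct equivalence class of patches of size $r$ (one cannot invoke Lemma~\ref{lem.ConnComp} directly because the canonical window is not a parallelotope generated by integer vectors, so this direct argument is necessary). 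The Birkhoff Ergodic Theorem then shows these patch classes have frequency $o(r^{-d})$, which contradicts the frequency lower bound $\gg r^{-d}$ that any LR set must satisfy (by Lemma~\ref{lem.FreqVol} together with the complexity bound $c(r)\ll r^d$). Hence no canonical cut and project set defined using $E$ is LR.

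The main obstacle I anticipate is bookkeeping the geometry of the canonical window precisely enough to exhibit the slanted edge and the triangle $\mc{T}_r$, and in particular making sure that the integer translation $n$ supplied by \eqref{eqn.WellAppHyp} acts on the correct vertex so that the two bounding coordinate lines really do cut off a triangle of area $o(r^{-2})$ — this requires translating the one-dimensional statement $\inf_n n\{(5\alpha\beta/2)n\}=0$ into a statement about $\|\rho^*(n)\|$ having an anomalously small coordinate, which in turn rests on the observation that the $x_1$-coordinate of the internal-space action is (up to the rational factor) the product $\alpha\beta$. The existence of $\alpha,\beta$ badly approximable with \eqref{eqn.WellAppHyp} is exactly Hall's theorem (applied to write, say, $5\alpha\beta/2$ — or a Liouville-type number times a correction — as a product of two badly approximable numbers, with one of them absorbing the rational factor), so that part I would simply cite. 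The case analysis verifying that $\mc{T}_r$ is a distinct patch class is routine once the analogous argument in Lemma~\ref{lem.Cub<->Can1} is in hand, so I would keep it brief, emphasizing only the points where the $k=4$, $d=2$ geometry differs from the $k=5$, $d=3$ one.
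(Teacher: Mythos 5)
Your proposal is correct and follows essentially the same route as the paper: the cubical case via Theorem~\ref{thm.LRClassification} with $r_1=r_2=1$ and each $L_i$ reducing to a badly approximable number on a rank-one complement of its kernel, and the canonical case by cutting off a triangle of area $o(r^{-2})$ near a corner of the (octagonal, not hexagonal) window using a translate of the slanted boundary edge in the direction $\rho^*(e_1)=(2/5)e_3+\beta e_4$, then running the Cases 1--3 argument and the frequency bound from Lemma~\ref{lem.Cub<->Can1}. The one heuristic you should adjust is that no single coordinate of $\rho^*(n)$ is governed by $\alpha\beta$; the product only appears after eliminating $n_1$ by requiring the translated edge to meet the line $e_3+\R e_4$, whereupon the intersection height equals $-\tfrac{5\alpha\beta}{2}n_2+1+n_4+\tfrac{5\beta}{2}(1+n_2-n_3)$ and the choice $n_3=1+n_2$ reduces the smallness of the triangle exactly to condition \eqref{eqn.WellAppHyp}.
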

\begin{proof}
First of all we remark that, by a general version of Khintchine's Theorem (see \cite[Theorem 1]{Schm1964}), almost every real number $\gamma$ has the property that
\[\inf_{n\in\N}n\{n\gamma\}=0.\]
For such a $\gamma$, it follows from Hall's Theorem that there are badly approximable $\alpha$ and $\beta$ satisfying $5\alpha\beta/2=\gamma$, and therefore (assuming $\gamma>0$) the hypotheses of the lemma.\vspace*{.05in}

It is easy to see that $E$ is totally irrational and that cubical and canonical cut and project sets formed using $E$ will be aperiodic. By Theorem \ref{thm.LRClassification}, cubical cut and project sets formed using $E$ will be LR.\vspace*{.05in}

The canonical window in $F_\rho$ is an octagon which includes, on its boundary, the line segment from $e_4$ to $(2/5)e_3+(1+\beta)e_4$. Each integer $n\in\Z^4$ acts on this line segment, moving it to a line segment which we denote by $\ell_n$. The initial point of $\ell_n$ is the point
\[((2/5)n_1+\alpha n_2+n_3,\beta n_1+(5\beta/2)n_2+1+n_4)\]
in the $e_3e_4$-plane. For any choice of $n_2,n_3 ,$ and $n_4$, there is a unique choice of $n_1$  with the property that $\ell_n$ intersects the line $e_3+\R e_4$, and it is clear that $|n_1|$ is bounded above by a constant (depending at most on $\alpha$ and $\beta$) times the maximum of $|n_2|,|n_3|,$ and $|n_4|$. The intersection point just described is $e_3+ye_4$, where $y=y(n_2,n_3,n_4)$ is given by
\[y=\frac{-5\alpha\beta}{2}n_2+1+n_4+\frac{5\beta}{2}(1+n_2-n_3).\]
Since \eqref{eqn.WellAppHyp} is satisfied, for any $\epsilon>0$ there is a number $r>1$ and integers $n_2$ and $n_4$ with $|n_2|,|n_4|\le r$, such that
\[\left|\frac{5\alpha\beta}{2}n_2-(1+n_4)+1\right|<\frac{\epsilon}{r}.\]
For such a choice of $n_2$ and $n_4$, and with $n_1$ selected as above, we take $n_3=1+n_2$. Then the line segment $\ell_n$, together with the lines $e_4+\R e_3$ and $e_3+\R e_4$, bound a triangle of area $\ll \epsilon r^{-2}$. Since $\epsilon$ can be taken arbitrarily small, the remainder of the proof follows from the same type of argument as the one used at the end of the proof of Lemma \ref{lem.Cub<->Can1}.
\end{proof}
To bring us to our concluding remarks for this section, we first of all mention that the proof we have just given is somewhat biased towards one particular point of view. In fact, there is a conceptually easier proof (which is instructive in a different way), which we now explain. If we reparameterize the subspace $E$ in the statement of the lemma, with respect to the `reference' subspace generated by $e_3$ and $e_4$, then it is easy to see that condition (LR1) from Theorem \ref{thm.LRClassification} is not satisfied, so the corresponding cubical cut and project sets are not LR. Therefore canonical cut and project sets obtained using $E$ (which of course do not depend on the choice of reference subspace) are also not LR.

This simple consideration leads us to ask the following question:
\begin{problem}
Is it true that a canonical cut and project set will be LR if and only if all of the cubical cut and project sets obtained from taking different parameterizations of $E$, with respect to different orderings of the standard basis vectors, are also LR?
\end{problem}
At the moment we do not know the answer to this question. However, if the answer is yes, it means that Theorem \ref{thm.LRClassification} gives a complete characterization of all canonical as well as cubical cut and project sets. We leave this as an open problem for future research.

\section{Proof of Theorem \ref{thm.PQSuff}}\label{sec.ThmPQ}
In this section we turn to the problems of relating our conditions for LR to those from \cite{BesbBoshLenz2013}, and of determining when subadditive ergodic theorems hold for cut and project sets. Let $Y$ be a cubical cut and project set. It follows from \cite[Theorem 2]{BesbBoshLenz2013} that $Y$ is LR if and only if it satisfies conditions (PQ) and (U) (we will not define condition (U), since it is unnecessary for our purposes). Therefore, if $Y$ is LR then it satisfies condition (PQ) and, by \cite[Theorem 1]{BesbBoshLenz2013}, it satisfies a subadditive ergodic theorem.

In the other direction, suppose that $Y$ satisfies condition (PQ) (equivalently, that $Y$ satisfies a subadditive ergodic theorem). Then the fact that $\nu'(\mc{P})>0$ for all equivalence classes $\mc{P}$ implies that there is a constant $C>0$ with the property that, for any equivalence class $\mc{P}=\mc{P}(y,r)$, we have that
 \begin{equation}\label{eqn.FreqLowBd}
 \xi_\mc{P}\ge\frac{C}{r^d}.
 \end{equation}
It follows, first of all, that the number of patches of size $r$ is $\ll r^d$. By the first part of our proof of Theorem \ref{thm.LRClassification}, this implies that condition (LR1) is satisfied.

Now suppose that condition (LR2) is not satisfied and assume, without loss of generality, that the linear form $L_1$ defining $E$ is not relatively badly approximable.  Then, for any $\epsilon>0$, we can choose $r>0$ so that there is a connected component $U$ of $\mathrm{reg}(r)$, given as in \eqref{eqn.ConnCompU}, with
\[r_1-\ell_1<\frac{\epsilon}{r^{m_1}}.\]
We can also assume that $U$ has been chosen so that
\[r_i-\ell_i\ll\frac{1}{r^{m_i}},\quad\text{for each}~2\le i\le k-d.\]
Then $U$ has volume $\ll\epsilon /r^d$, where the implied constant depends only on $E$. Since $\epsilon$ can be taken arbitrary small, this together with \eqref{eqn.FreqLowBd} and Lemma \ref{lem.FreqVol} implies that (PQ) is not satisfied. Therefore, (PQ) implies (LR1) and (LR2), which, by Theorem \ref{thm.LRClassification}, implies linear repetitivity.

\section{Hausdorff dimension results}\label{sec.CorHD}
In this section we will prove Corollary \ref{cor.HausDim}. Our proof of Theorem \ref{thm.LRClassification} demonstrates how, to each LR cubical cut and project set, we may associate a subgroup $\Lambda\leqslant\Z^d$ of finite index, with decomposition
\[\Lambda=\Lambda_1+\cdots +\Lambda_{k-d},\]
so that each $L_i$ is badly approximable, when viewed as a linear form in $m_i$ variables, restricted to $\Lambda_i$. The first part of Corollary \ref{cor.HausDim} clearly follows from the fact that the integers $m_i\ge 1$ have sum equal to $d$.

In the other direction, suppose that $d\ge k-d$. If we start with $k-d$ positive integers $m_i$, with sum equal to $d$, and a collection of badly approximable linear forms $L_i:\R^{m_i}\rar\R$ then, thinking of
\[\R^d=\R^{m_1}+\cdots +\R^{m_{k-d}},\]
any cubical cut and project set arising from the subspace
\[E=\{(x,L_1(x),\ldots ,L_{k-d}(x)):x\in\R^d\}\]
is LR, by the proof of Theorem \ref{thm.LRClassification}. It follows that the collection of $\{\alpha_{ij}\}\in\R^{d(k-d)}$ which define LR cubical cut and project sets is a countable union (over all allowable choices of $\Lambda_i$ above) of sets of Lebesgue measure $0$ and Hausdorff dimension at most
\[\mathrm{dim}\mc{B}_{m_1,1}+\cdots+\mathrm{dim}\mc{B}_{m_{k-d},1}=m_1+\cdots +m_{k-d}=d.\]
Since the cubical cut and project sets corresponding to $\Lambda_i=\Z^{m_i}$ are all LR, the Hausdorff dimension of this set is equal to $d$.

The part of Corollary \ref{cor.HausDim} about canonical cut and project sets follows from the same arguments just given, together with Corollary \ref{cor.MLDCubCanon}.

\section{Examples}\label{sec.Examples}
\subsection{Explicit examples for all $d\ge k-d$}
For $d\ge k/2$ it is easy to give examples of subspaces $E$ satisfying the hypotheses of Theorem \ref{thm.LRClassification}. Write $d=m_1+\cdots +m_{k-d}$, with positive integers $m_i$, and for each $i$ let $K_i$ be an algebraic number field, of degree $m_i+1$ over $\Q$. Suppose that the numbers $1,\alpha_{i1},\ldots ,\alpha_{im_i}$ form a $\Q$-basis for $K_i$, and define $L_i:\R^{m_i}\rar \R$ to be the linear form with coefficients $\alpha_{i1},\ldots ,\alpha_{im_i}$. Then, using the decomposition $\R^d=\R^{m_1}+\cdots +\R^{m_{k-d}}$, let
\[E=\{(x,L_1(x),\ldots ,L_{k-d}(x)):x\in\R^d\}.\]
The collection of points
\[\{(L_1(n),\ldots ,L_{k-d}(n)):n\in\Z^d\}\]
is dense in $\R^{k-d}/\Z^{k-d},$ and it follows from this that the subspace $E$ is totally irrational. The intersection of the kernels of the corresponding maps $\mc{L}_i$ is $\{0\},$ so any cubical cut and project set formed from $E$ will be aperiodic. Condition (LR1) of Theorem \ref{thm.LRClassification} is clearly satisfied. Furthermore, by a result of Perron \cite{Perr1921}, each of the linear forms $L_i$ is badly approximable. Therefore (LR2) is also satisfied, and any cubical cut and project set formed from $E$ is LR. Furthermore, the hypotheses in the second part of Corollary \ref{cor.MLDCubCanon} are also satisfied, so any canonical cut and project set formed using $E$ is also LR.

\subsection{Ammann-Beenker tilings}\label{sec.AmmBee}
Collections of vertices of Ammann-Beenker tilings can be obtained as canonical cut and project sets, using the two dimensional subspace $E$ of $\R^2$ defined by
\[E=\{(x,L_1(x),L_2(x)):x\in\R^2\},\]
with
\begin{align*}
  L_1(x)=\frac{\sqrt{2}}{2}(x_1+x_2)\quad\text{and}\quad L_2(x)=\frac{\sqrt{2}}{2}(x_1-x_2).
\end{align*}
Although we cannot directly appeal to either Theorem \ref{thm.LRClassification} or Corollary \ref{cor.MLDCubCanon}, we will explain how the machinery we have developed can be used to easily show that these sets are LR. The canonical window $\mc{W}$ in $F_\rho$ is the regular octagon with vertices at
\[\left(\frac{1+\sqrt{2}}{2}\pm\frac{1+\sqrt{2}}{2}~,~\frac{1}{2}\pm\frac{1}{2}\right)~\text{and}~ \left(\frac{1+\sqrt{2}}{2}\pm\frac{1}{2}~,~\frac{1}{2}\pm\frac{1+\sqrt{2}}{2}\right).\]
By the proof of \cite[Lemma 3.1]{HaynKoivSaduWalt2015}, every patch of size $r$ corresponds (in the sense of the statement of Lemma \ref{lem.ConnComp}) to a finite collection of connected components of $\mathrm{reg}(r)$. Therefore to demonstrate that a canonical cut and project set formed using $E$ is LR, it is enough to show that the there is a constant $C>0$ with the property that, for all sufficiently large $r$, the orbit of any regular point $w\in F_\rho$, under the action of the collection of integers
\[\rho^{-1}(Cr\Omega)\cap\Z^k,\]
intersects every connected component of $\mathrm{reg}(r)$.

We claim that every connected component of $\mathrm{reg}(r)$ contains a square with side length $\gg r^{-1}$. This follows from elementary considerations, by writing down the equations of the line segments that form the boundary of $\mc{W}$, considering the action of
\[-\rho^{-1}(r\Omega)\cap\Z^k\]
on these line segments, and then computing all possible intersection points. Since all of our algebraic operations take place in the field $\Q(\sqrt{2})$, it is not difficult to show that every connected component of $\mathrm{reg}(r)$ must contain a right isosceles triangle of side length $\gg r^{-1}$. The claim about squares follows immediately.

 Finally, the linear forms $L_1$ and $L_2$ are relatively badly approximable, and the sum of the ranks of $\mc{L}_1$ and $\mc{L}_2$ is equal to $2$. Therefore our study of the orbits of points towards the end of the proof of Theorem \ref{thm.LRClassification} applies as before, allowing us to conclude that the $Cr$-orbit of any regular point in $F_\rho$ intersects every connected component of $\mathrm{reg}(r)$.


\subsection{Penrose tilings}\label{sec.Penrose}
This example is similar to the previous one, but it also gives an indication of how to apply our techniques in cases when the physical space is not totally irrational. Let $\zeta=\exp(2\pi i/5)$ and let $Y$ be a canonical cut and project set defined using the two dimensional subspace $E$ of $\R^5$ generated by the vectors
\[(1,\mathrm{Re}(\zeta),\mathrm{Re}(\zeta^2),\mathrm{Re}(\zeta^3),\mathrm{Re}(\zeta^4))\]
and
\[(0,\mathrm{Im}(\zeta),\mathrm{Im}(\zeta^2),\mathrm{Im}(\zeta^3),\mathrm{Im}(\zeta^4)).\]
Well known results of de Bruijn \cite{Brui1981} and Robinson \cite{Robi1996} show that the set $Y$ is the image under a linear transformation of the collection of vertices of a Penrose tiling, and in fact that all Penrose tilings can be obtained in a similar way from cut and project sets. The fact that $Y$ is LR can be deduced directly from the definition of the Penrose tiling as a primitive substitution. However, as in the previous example, we will show how to prove this starting from the definition of $Y$ as a cut and project set.

The subspace $E$ is contained in the rational subspace orthogonal to $(1,1,1,1,1)$. In this case Theorem \ref{thm.LRClassification} does not apply directly, but the proof in Section \ref{sec.LRClass} is still robust enough to allow us to draw the desired conclusions. Set
\[\alpha_1=\cos (2\pi/5), \alpha_2=\cos (4\pi/5), \beta_1=\sin (2\pi/5),\text{ and } \beta_2=\sin (4\pi/5),\]
so that
\[E=\{(x, x\alpha_1+y\beta_1, x\alpha_2+y\beta_2, x\alpha_2-y\beta_2, x\alpha_1-y\beta_1):x,y\in\R\}.\]
After making the change of variables $x_1=x$ and $x_2=x\alpha_1+y\beta_1$, we can write $E$ as
\[E=\{(x,L_1(x),L_2(x),L_3(x)):x=(x_1,x_2)\in\R^2\}.\]
The functions $L_i$ are linear forms which (using the fact that $4\alpha_1^2+2\alpha_1-1=0$) are given by
\begin{align*}
L_1(x)&=-x_1+2\alpha_1x_2,\\
L_2(x)&=-2\alpha_1x_1-2\alpha_1x_2,~\text{and}\\
L_3(x)&=2\alpha_1x_1-x_2.
\end{align*}
Write $\mc{L}_i:\Z^2\rar\R/\Z$ for the restriction of $L_i$ to $\Z^2$, modulo $1$, and notice that $\mc{L}_1+\mc{L}_2+\mc{L}_3=0$. This means that the orbit of $0$ under the natural $\Z^2$-action of $E$ on $F_\rho/\Z^3$ is contained in the two dimensional rational subtorus with equation $x+y+z=0$. The kernels of the forms $\mc{L}_i$ are all rank $1$ subgroups of $\Z^2$, and it follows that the number of connected components of $\mathrm{reg}(r)$ which intersect the rational subtorus is $\asymp r^2$.

Since the forms are linearly dependent, we can understand the orbit of a point in $F_\rho/\Z^3$ under the $\Z^2$-action by considering only the values of $\mc{L}_1$ and $\mc{L}_3$. In other words, we can consider the projection of the problem onto the $e_1e_3$-plane. Consider the intersection of a connected component of $\mathrm{reg}(r)$ with the subspace $x+y+z=0$. This is a two dimensional region, bounded by the intersections of the subspace with translates (by the $\Z^5$ action) of the hyperplanes forming the boundary of the canonical window. Computing the vertices of the region is an operation which takes place in $\Q(\sqrt{5})$. As in the previous example, this leads to the conclusion that the intersection of any connected component of $\mathrm{reg}(r)$ with the subspace $x+y+z=0$, when projected to the $e_1e_3$-plane, contains a square of side length $\gg r^{-1}.$ The remainder of the proof follows exactly as before, allowing us to conclude that $Y$ is LR.

\vspace{.2in}

{\footnotesize

\noindent AH\,:
University of Houston, Texas, USA\\
haynes@math.uh.edu

\vspace{.1in}

\noindent HK\,:
University of Vienna, Austria\\
henna.koivusalo@univie.ac.at

\vspace{.1in}

\noindent JW\,:
University of Durham, UK\\
james.j.walton@durham.ac.uk

}

\end{document}